\newcommand{\de}{\mathrm{d}}
\newcommand{\upperRomannumeral}[1]{\uppercase\expandafter{\romannumeral#1}}
\newcommand{\lowerRomannumeral}[1]{\lowercase\expandafter{\romannumeral#1}}
\theoremstyle{plain}
\newtheorem{theorem}{Theorem}[section]
\newtheorem{corollary}[theorem]{Corollary}
\newtheorem{lemma}[theorem]{Lemma}
\newtheorem{proposition}[theorem]{Proposition}
\theoremstyle{definition}
\newtheorem{definition}{Definition}
\newtheorem{assumption}{Assumption}
\theoremstyle{remark}
\newtheorem{remark}{Remark}
\newenvironment{myproof}[1] {{
		
		\noindent\it{Proof of {#1}}. }}{\hfill\qedsymbol}
\def\ps@pprintTitle{ 
	\let\@oddhead\@empty
	\let\@evenhead\@empty
	\def\@oddfoot{\footnotesize\itshape
		\ifx\@empty\@empty
		\else\@journal\fi\hfill\today}%
	\let\@evenfoot\@oddfoot
}
\newcommand{\kws}{\bigskip\par\addvspace\medskipamount{\rightskip=0pt plus1cm}{\noindent \bfseries Keywords: \enspace}}
\newcommand{\MSC}{\bigskip\par\addvspace\medskipamount{\rightskip=0pt plus1cm}{\noindent \bfseries Mathematics Subject Classification (2000): \enspace}}
\newcommand{\JEL}{\bigskip\par\addvspace\medskipamount{\rightskip=0pt plus1cm}{\noindent \bfseries JEL Classification: \enspace}}
\title{Stochastic internal habit formation and optimality}
\author[1]{M. Aleandri\thanks{Email: \textit{maleandri@luiss.it.}}}
\author[1]{A. Bondi\thanks{Email: \textit{abondi@luiss.it.}}}
\author[1]{F. Gozzi\thanks{Email: \textit{fgozzi@luiss.it.}}}
\affil[1]{ {\small LUISS University, DEF, 00197 viale Romania 32, Roma,  Italy.}}
\date{}
\begin{document}

    \maketitle

\begin{abstract}
   Growth models with internal habit formation have been studied in various settings under the assumption of deterministic dynamics. The purpose of this paper is to explore a stochastic version of the model in \cite{Carroll1997,Carroll2000}, one the most influential on the subject. The goal is twofold: on one hand, to determine how far we can advance in the technical study of the model; on the other, to assess whether at least some of the deterministic outcomes remain valid in the stochastic setting.
The resulting optimal control problem proves to be challenging, primarily due to the lack of concavity in the objective function. This feature is present in the model even in the deterministic case (see, e.g., \cite{BAMBI2020}).
We develop an approach based on Dynamic Programming to establish several useful results, including the regularity of the solution to the corresponding HJB equation and a verification theorem. There results lay the groundwork for studying the model optimal paths and comparing them with the deterministic case.
\end{abstract}

\kws Habit formation; Optimal investment and consumption; Stochastic control; Dynamic programming; Viscosity solution; Hamilton–Jacobi–Bellman equation.

\MSC 49L20; 49L12; 49L25; 49J55; 93E20;

\JEL C32; C61; D91; E21; O40.

\section{Introduction}

	In two highly influential works, \cite{Carroll1997,Carroll2000} analyze the dynamics of an endogenous growth model incorporating internal habit formation. They demonstrate that this feature plays a key role in explaining how increased economic growth can lead to higher savings. This finding is particularly significant because it is based on a ``multiplicative" habit formation model, as opposed to the ``subtractive" form, which has several drawbacks. Among them, the ``subtractive" form fosters addictive behavior by requiring consumption to consistently exceed the habit stock, potentially resulting in infinitely negative utility.
    \\
	In \cite{BAMBI2020}, the authors revisit the optimality of the solution found in \cite{Carroll2000} using a Dynamic Programming approach, differing from previous works that relied on the Maximum Principle.
    The advantage of Dynamic Programming lies in its ability to explore global dynamics and, more importantly, to establish sufficient optimality conditions without requiring concavity assumptions. Indeed, concavity does not hold in this model (see, e.g., \cite{BAMBI2020} for a discussion).
    \\
    This paper investigates a stochastic version of the above model, incorporating multiplicative noise in the state equations. The aim is to conduct a thorough technical analysis of the model while also evaluating whether at least some of its original deterministic outcomes remain valid. We refer to, for instance, \cite{ABY22} for another stochastic model with habit formation in a different context. 
    \\
    The resulting optimal control problem proves to be highly challenging, for two main reasons: the lack of concavity in the objective function -- which, as already mentioned, is one of the model’s key characteristics even in the deterministic case -- and its singularity (i.e., the limit is $-\infty$)  when the consumption goes to $0$.
    \\
    We tackle this optimal control problem by developing the Dynamic Programming approach. In particular, we show that the corresponding value function, denoted by $V$, is a classical solution of the HJB equation in the first open quadrant $\mathbb{R}^2_{++}$, see Theorem \ref{classical_sol}. This is a nontrivial result, which is established in three main steps. More precisely, we first obtain some properties of $V$ (see Section \ref{sec_regularity}), then we prove that $V$ is a viscosity solution of the HJB equation, and finally we apply a local regularization approach based on crucial results by  \cite{safonov1989classical} and \cite{crandall1992user}.
We also establish a verification theorem, see Theorem \ref{teo:sufficient}, which gives a sufficient condition for optimality. \\These results
    provide a solid foundation for the study of the model optimal paths and their connections with the deterministic case. This will be the subject of subsequent research.
\vspace{2mm}

   The paper is organized as follows. Section \ref{sec_model} introduces the model and the optimal control problem we are interested in. In Section \ref{sec_HJB}, the corresponding HJB equation is formally obtained, together with some properties of the maximum value Hamiltonian. Some important properties (such as monotonicity and continuity) of the value function $V$ associated with the control problem are then investigated in Section \ref{sec_regularity}.
  In Section \ref{sec_new} we prove that $V$ is a classical solution of the HJB equation, see Theorem \ref{classical_sol}. This section is divided into two subsections: in the first (Subsection \ref{sec_newvisc}),  we demonstrate that $V$ is a viscosity solution, while in the second (Subsection \ref{sec_classical}) we prove the main Theorem  \ref{classical_sol}. Finally, the verification theorem is established in Section \ref{sec_verification}. In Appendix \ref{Appendix}, we prove some technical results which are employed in the arguments of Subsection \ref{sec_classical}.

	\section{The model}\label{sec_model}
	Consider a probability space $(\Omega,\mathcal{F},\mathbb{P})$ endowed with a filtration $\mathbb{F}=(\mathcal{F}_t)_t$ which satisfies the usual hypotheses. The model state variables are the processes  $k=(k_t)_{t\ge0}$ and $h=(h_t)_{t\ge 0}$, representing the capital and the habit, respectively. Their evolution is described by the following controlled linear stochastic differential equations (henceforth, SDEs):
	\begin{align}\label{eq:k}
		&\de k_t = \big(Bk_t-c_t\big)\de t + \beta_1k_t\de W_{1,t},\quad k_0>0,\\\label{eq:h}
		&\de h_t = \rho\big(c_t-h_t\big)\de t + \beta_2h_t\de W_{2,t},\quad h_0>0.
	\end{align}
	Here,  $W_1,W_2$ are two independent  Brownian motions,  $\beta_1,\beta_2>0$, $\rho\in(0,1)$ and $B\ge 0$. The positive initial conditions $k_0$ and $h_0$ are assumed to be deterministic. 
    The control process $c=(c_t)_{t\ge0}$ represents the consumption; it is progressively measurable  and (strictly) positive, with locally integrable trajectories. For any choice of $c$ and $h_0,\,k_0$, the dynamics \eqref{eq:k}-\eqref{eq:h} admit unique solutions, up to indistinguishability. Their explicit expressions are given by 
    \begin{align}\label{expl_eq_k}
&k_t=\mathcal{E}(Bt+\beta_1 W_{1,t})\left(
k_0-\int_0^t\frac{c_s}{\mathcal{E}(Bs+\beta_1W_{1,s})}\de s\right),\quad t\ge0,\\
\label{expl_eq_h}
&h_t=\mathcal{E}(-\rho t+\beta_2 W_{2,t})\left(
h_0+\rho \int_0^t\frac{c_s}{\mathcal{E}(-\rho s+\beta_2W_{2,s})}\de s\right),\quad t\ge 0,
\end{align}
    where we denote by $\mathcal{E}(X_t)$ the Doléans-Dade exponential of a continuous semimartingale $X$ evaluated at time $t$. 
    In the sequel, when the context requires to clarify the dependence of these solutions on the initial conditions and the control, we will denote by \begin{equation}\label{notation_dependence}
       h^{h_0,0,c} \text{ the solution of (\ref{eq:h}) controlled by $c$ starting from $h_0$ at time $0$.}
    \end{equation}
    An analogous notation will be used for the capital process $k^{k_0,0,c}.$\\
    For a fixed initial condition $(k_0,h_0)$, we denote by $\mathcal{A}(k_0,h_0)$ the set of admissible controls. It  is determined by imposing the positivity state constraints
\begin{equation}\label{constraints-kh}
		k_t>0 \quad\mbox{ and }\quad h_t>0,\qquad \text{for every } t\geq 0,\,\mathbb{P}-\text{a.s.,}
	\end{equation}
	together with the following relation between $c$ and $k$:
\begin{equation}\label{bdd_control}
        c\le R\, k,\quad \mathbb{P}\otimes \de t-\text{a.e., for some constant $R>0$.}
    \end{equation}
 Summarizing, 
 \begin{equation}
		\mathcal{A}(k_0,h_0):=\left\{c\colon \Omega\times[0,\infty)\to \mathbb{R} \text{ progr. meas.}\, \Bigg| \begin{aligned}
			&  \text{$c(\omega,\cdot) \in L^1_\text{loc}(0,\infty),$ for $\mathbb{P}-$a.e. $\omega\in\Omega,$}\\& c>0,\, \text{for } \mathbb{P}\otimes \de t-\text{a.e.,  the solutions of } \\&  \text{\eqref{eq:k}-\eqref{eq:h} satisfy \eqref{constraints-kh}}, \text{ and \eqref{bdd_control} holds}
\end{aligned}\right\}.\label{def:setofcontrols}
	\end{equation}
	Given $\theta>0,\,\sigma\in(1,\infty)$ and $\gamma\in[0,1)$, the utility function of our control problem is defined by
	\begin{align}\label{def_J}\notag
J (k_0,h_0;c)&\coloneqq\mathbb{E}\bigg[\int_{0}^\infty \frac{1}{1-\sigma}\left(\frac{c_t}{h_t^\gamma}\right)^{1-\sigma}e^{-\theta t}\de t\bigg]
\\
&=-\frac{1}{\sigma-1}
\mathbb{E}\bigg[\int_{0}^\infty \left(\frac{h_t^\gamma}{c_t}\right)^{\sigma-1}e^{-\theta t}\de t\bigg]\le 0,\quad k_0,\,h_0>0,\,c\in\mathcal{A}(k_0,h_0).
	\end{align}
    From now on, we write $\mathbb{R}_{+}$ for $(0,\infty)$ and $\mathbb{R}^2_{++}$ for $(0,\infty)^2$. The  value function $V\colon \mathbb{R}^2_{++}\to [-\infty,0]$ associated with $J$ in \eqref{def_J} is  \begin{equation}\label{def:valuefunction}
		V(k_0,h_0)=\sup_{c\in\mathcal{A}(k_0,h_0)}  J(k_0,h_0;c),\quad (k_0,\,h_0)\in \mathbb{R}^2_{++}.
	\end{equation}

        We assume that the parameters of the model fulfill the following requirement (cf. \cite[Assumption 2.4]{freni2008optimal}).
    \begin{assumption}\label{assumption1} Suppose that $R\ge B$. Furthermore, the parameters $(\beta_1,\,\beta_2,\,\gamma,\,\sigma,\,\theta)$ satisfy the following conditions: 
    \begin{enumerate}[(i)]
        \item\label{minore} if $\rho+\frac{1}{2}(\beta_2^2-\beta_1^2)\le 0$, 
         then 
         \begin{equation}\label{eq_minore}
             {\theta}>(\sigma-1)\left(-\gamma\rho-\frac{1}{2}(\gamma\beta_2^2-\beta_1^2)+
            \max\left\{2,\frac{1}{\gamma(\sigma-1)}\right\}(\sigma-1)(\beta_1^2 +\gamma^2\beta_2^2)
           \right)
             ;
         \end{equation}
        \item \label{maggiore}
        if $\rho+\frac{1}{2}(\beta_2^2-\beta_1^2)>0$, then 
        \begin{equation}\label{eq_maggiore}
\theta>(\sigma-1)\bigg(
\frac{1}{2}\beta_1^2(1-\gamma)+\max\left\{2, \frac{1}{\gamma(\sigma-1)}\right\}(\sigma-1)(\beta_1^2+\gamma^2\beta_2^2)
\bigg).
        \end{equation}
    \end{enumerate}
      \end{assumption}
\noindent This assumption is sufficient to ensure the existence of strategies with finite utility, as demonstrated in Proposition \ref{prop:lbound}. \vspace{2mm}

We conclude this section with a lemma showing that, for every initial point in $\mathbb{R}^2_{++}$, the set of admissible controls is nonempty. This is crucial to ensure the meaningfulness of our setting. 
\begin{lemma}\label{set_controls}
    For every $(k_0,\,h_0)\in \mathbb{R}^2_{++}$, the set  $\mathcal{A}(k_0,h_0)$ of admissible controls is nonempty. In addition, $\mathcal{A}(k_0,h_0)$ does not depend on $h_0$, that is, 
    \begin{equation}\label{indep_h0}
\mathcal{A}(k_0,h_0)=\mathcal{A}(k_0).
\end{equation}
\end{lemma}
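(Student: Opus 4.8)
The plan is to handle the two assertions separately. For the nonemptiness of $\mathcal{A}(k_0,h_0)$ I would exhibit one concrete admissible control, the one that consumes a fixed small fraction of the available capital; for the independence of $h_0$ I would use the closed-form expression \eqref{expl_eq_h} to show that the state constraint $h_t>0$ is automatically in force, so that $h_0$ never enters the definition of $\mathcal{A}(k_0,h_0)$.

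\textbf{Nonemptiness.} Fix any $\delta\in(0,R]$, which is possible since $R>0$, and consider the feedback prescription $c_t=\delta k_t$. Substituting it into \eqref{eq:k} turns the capital equation into the linear SDE $\de k_t=(B-\delta)k_t\,\de t+\beta_1 k_t\,\de W_{1,t}$, whose unique solution is the geometric Brownian motion $k_t=k_0\,\mathcal{E}\big((B-\delta)t+\beta_1 W_{1,t}\big)$; the associated control $c_t=\delta k_t$ is thus well defined. Since $k$ is adapted with continuous, strictly positive trajectories, $c=\delta k$ is progressively measurable, strictly positive $\mathbb{P}\otimes\de t$-a.e., has continuous (in particular locally integrable) trajectories, and satisfies $c=\delta k\le Rk$ because $\delta\le R$. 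Finally, feeding this $c$ into \eqref{expl_eq_h} writes $h_t$ as a strictly positive factor times the sum of $h_0>0$ and a nonnegative integral, so $h_t>0$ for every $t\ge0$, $\mathbb{P}$-a.s. Hence $c\in\mathcal{A}(k_0,h_0)$, and this set is nonempty.

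\textbf{Independence of $h_0$.} I would first observe that, among the requirements in \eqref{def:setofcontrols}, the only one that could involve $h_0$ is the constraint $h_t>0$ coming from \eqref{constraints-kh}: progressive measurability, strict positivity and local integrability of $c$, the bound \eqref{bdd_control}, and the constraint $k_t>0$ --- which by \eqref{expl_eq_k} depends only on $c$, $k_0$ and $W_1$ --- make no reference to $h_0$. It then suffices to check that $h_t>0$ holds automatically for every progressively measurable, strictly positive $c$ with $c(\omega,\cdot)\in L^1_\text{loc}(0,\infty)$, and every $h_0>0$. This follows from \eqref{expl_eq_h}: for $\mathbb{P}$-a.e. $\omega$ the path $s\mapsto\mathcal{E}(-\rho s+\beta_2 W_{2,s})(\omega)=\exp\big(-\rho s+\beta_2 W_{2,s}(\omega)-\tfrac12\beta_2^2 s\big)$ is continuous and strictly positive, hence bounded and bounded away from $0$ on each compact time interval, so the integral in \eqref{expl_eq_h} is finite and nonnegative for all $t\ge0$ and $h_t>0$. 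Therefore $\mathcal{A}(k_0,h_0)$ is one and the same set for all $h_0>0$, which is exactly \eqref{indep_h0}.

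All the steps are elementary; the only one demanding some care is verifying, in the second part, that the integral defining $h$ in \eqref{expl_eq_h} converges for an arbitrary locally integrable control --- which is precisely where the local boundedness, above and away from zero, of the Doléans-Dade exponential appearing in \eqref{expl_eq_h} is used.
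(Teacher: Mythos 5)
Your proof is correct. For the independence of $h_0$ you argue exactly as the paper does: $h_t>0$ follows automatically from the closed form \eqref{expl_eq_h} whenever $c>0$ and locally integrable, and none of the remaining conditions in \eqref{def:setofcontrols} involve $h_0$. The one genuine difference is in the nonemptiness construction. The paper exhibits the open-loop control $c=\nu\,\mathcal{E}(\beta_1 W_1)$ and must choose $\nu<Bk_0$ small enough so that \emph{both} $k_0-\nu/B>0$ and $\tfrac{R}{\nu}(k_0-\nu/B)>1$ hold, checking positivity of $k$ and the constraint $c\le Rk$ separately. You instead use the proportional feedback $c=\delta k$ with any $\delta\in(0,R]$; once one closes the loop the resulting $k$ is a geometric Brownian motion $k_0\,\mathcal{E}((B-\delta)t+\beta_1 W_{1,t})$, so strict positivity of $k$ is automatic, $c\le Rk$ reduces to $\delta\le R$, and there is no small parameter to tune. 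The only point to make sure of — and your argument covers it implicitly, via uniqueness for the linear SDE obtained after substitution — is that the process $c=\delta k$ obtained from the closed-loop equation, once plugged back as an open-loop control into \eqref{eq:k}, reproduces the same $k$; this holds by uniqueness, so there is no circularity. Overall your route is slightly cleaner and reaches the same conclusions.
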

\begin{proof}
   Fix $(k_0,h_0)\in\mathbb{R}^2_{++}$. Choosing $c=\nu \mathcal{E}(\beta_1 W_1),$ for some constant $\nu>0$ \emph{to be determined}, \eqref{expl_eq_k} reads
\begin{equation}\label{expl_k_c}
k_t=e^{Bt}\mathcal{E}(\beta_1W_{1,t})
\left(k_0-\frac{\nu}{B}(1-e^{-Bt})\right),\quad t\ge 0.
\end{equation}
Therefore, if we take $\nu>0$ such that $\frac{\nu}{B}<k_0$,
\[
k_t\ge \left(k_0-\frac{\nu}{B}\right)\mathcal{E}(\beta_1W_{1,t})>0
,\quad t\ge 0,
\]
hence the positivity constraint \eqref{constraints-kh} on $k$ is satisfied. If we further require that $\frac{R}{\nu}(k_0-\frac{\nu}{B})>1$, then, considering also that $c>0,$
\[
R k_t\ge \frac{R}{\nu}\left(k_0-\frac{\nu}{B}\right)c_t>c_t
,\quad t\ge 0,
\]
which proves that \eqref{bdd_control} is fulfilled. \\
The two previous conditions on $\nu$ are satisfied by taking $\nu<Bk_0$ sufficiently small. With this choice of $\nu$, we claim that $c\in \mathcal{A}(k_0,h_0)\neq \emptyset$. Indeed, from the explicit expression of the habit process $h$ in \eqref{expl_eq_h}, the positivity constraint \eqref{constraints-kh} is always satisfied when the control process is positive. This also demonstrates that \eqref{indep_h0} holds. Thus, the proof is complete. 
\end{proof}


	\section{HJB equation: Formal derivation}\label{sec_HJB}

	Given $(k,h)\in\mathbb{R}^2_{++}$, $P\in\mathbb{R}^2$ and $Q\in\mathbb{R}^{2\times 2}$, the current value Hamiltonian is  
	\begin{equation*}
		H(k,h,P,Q;c)= a(k,h)^\top P+\frac{1}{2}\mathrm{tr}\big(\Sigma(k,h)^\top Q\Sigma(k,h)\big) +\frac{1}{1-\sigma}\Big(\frac{h^\gamma}{c}\Big)^{1-\sigma},\quad c\in (0,R k].
	\end{equation*}
	Here, $a(k,h)= \left[\begin{smallmatrix}
		Bk-c \\ \rho(c-h)
		\end{smallmatrix}\right]$ and $\Sigma(k,h)=\left[\begin{smallmatrix}
		\beta_1k & 0 \\ 0 & \beta_2h
	\end{smallmatrix}\right]$ . Therefore, denoting by $D^2g$ the Hessian matrix of a sufficiently smooth function $g$, the HJB equation associated with the value function $V$ is, formally, 
	\begin{equation}\label{eq:HJB-1}
         \theta V = \sup_{c\in (0,R k]}	H(k,h,\nabla V,D^2V;c).
	\end{equation}
	More explicitely, \eqref{eq:HJB-1} reads
	\begin{align}\label{HJB_interm1}\notag
		\theta V (k,h)&= Bk\,\partial_kV(k,h)-\rho h\,\partial_hV(k,h)+\frac{1}{2}\mathrm{tr}\big(\Sigma(k,h)^\top D^2V(k,h)\Sigma(k,h)\big)\\&\notag
        \qquad+\sup_{c\in(0,R k]}\Big\{-c\,\partial_k V(k,h)+\rho c\,\partial_h V(k,h)-\frac{1}{\sigma-1}\Big(\frac{h^\gamma}{c}\Big)^{\sigma-1}\Big\}
        \\
        &
        \eqqcolon
        \mathcal{L}_{k,h}V+ 
        \sup_{c\in(0,R k]} g(c;h,\partial_k V(k,h),\partial_h V(k,h) )
        .
	\end{align}
	We now focus on the supremum appearing in \eqref{HJB_interm1}, writing $\partial _k V$ (resp., $\partial _h V$) for $\partial _k V(k,h)$ (resp., $\partial _h V(k,h)$) to make the notation shorter.  When $ \partial_kV-\rho\,\partial_hV\le 0$, the mapping $g(\cdot;h,\partial_k V,\partial_h V)$ is increasing in $(0,R k]$, hence it achieves its  maximum  at $c=R k$. \vspace{1mm}\\
    When $\partial_kV-\rho\,\partial_hV> 0,$ the study of the first derivative sign entails that 
\[    
\text{$g(\cdot;h, \partial_k V,\partial_h V )$  is increasing in $\Big(0, \Big(\frac{h^{\gamma(\sigma-1)}}{\partial_kV-\rho\,\partial_h V}\Big)^{\frac{1}{\sigma}}\Big]$ and decreasing in $\Big[\Big(\frac{h^{\gamma(\sigma-1)}}{\partial_kV-\rho\,\partial_h V}\Big)^{\frac{1}{\sigma}}, \infty\Big)$.}
\] 
    As a result, also in this case the supremum is in fact a maximum achieved at $$c=\min\Big\{R k,\Big(\frac{h^{\gamma(\sigma-1)}}{\partial_kV-\rho\,\partial_h V}\Big)^{\frac{1}{\sigma}}\Big\}.$$
	In particular, notice that
    \[
    \Big(\frac{h^{\gamma(\sigma-1)}}{\partial_kV-\rho\,\partial_h V}\Big)^{\frac{1}{\sigma}}\le R k
    \qquad \Longleftrightarrow\qquad 
    \partial_kV-\rho\,\partial_h V
    \ge 
 \frac{h^{\gamma(\sigma-1)}}{(R k)^\sigma}.
    \]
    Combining the two cases, it follows that 
	\begin{align}\label{G_expression}
	&\notag	G(k,h,\partial_k V,\partial_h V)   =\sup_{c\in(0,R k]}g(c;h,\partial_kV,\partial_hV)\\&\qquad 
    =
    \begin{cases}
		-R k(\partial_kV-\rho\,\partial _hV)-\dfrac{1}{\sigma-1}\Big(\dfrac{h^\gamma}{R k}\Big)^{\sigma-1}, & \partial_kV-\rho\,\partial_h V
    \le 
h^{\gamma(\sigma-1)}(R k)^{-\sigma},\\
            -\Big(1+\dfrac{1}{\sigma-1}\Big)
            [h^\gamma(\partial_kV-\rho\,\partial_h V)]^{1-\frac{1}{\sigma}},&\partial_kV-\rho\,\partial_h V
   >
 {h^{\gamma(\sigma-1)}}{(R k)^{-\sigma}}.
		\end{cases}
	\end{align}
	Thus, the HJB equation in \eqref{HJB_interm1} can be rewritten as 
	\begin{equation}\label{eq:HJB-s}
		\theta V (k,h)= \mathcal{L}_{k,h}V + G(k,h,\partial_k V(k,h),\partial_h V(k,h))\eqqcolon H_{\text{max}}(k,h,\nabla V(k,h), D^2V(k,h)).
	\end{equation}

In the sequel, given a point $P\in\mathbb{R}^2_{++}$ and positive constant $r>0$, we denote by $B_r(P)$ the open ball in $\mathbb{R}^2$ centered at $P$ of radius $r$, and by $\overline{B}_r(P)$ its closure. For any $n\in\mathbb{N},$ let 
\[
\mathcal{S}^n \text{ be the space of symmetric matrices in $\mathbb{R}^{n\times n}$.}
\]
Given $A,B\in\mathcal{S}^n$, the expression $A\le B$ means that $B-A\in\mathcal{S}^n$ is positive semidefinite. 
\vspace{2mm}\\
We conclude this section with two lemmas  collecting some properties of the maximum value Hamiltonian $H_{\text{max}}$ in \eqref{eq:HJB-s}. In  Subsection \ref{sec_classical}, such properties  will be crucial to study the regularity of $V$ via PDE techniques. 

\begin{lemma}\label{lemm:safonov}
    Let $D\subset \mathbb{R}^2$ be an open and bounded set such that its closure $\overline{D}\subset \mathbb{R}^2_{++}$.  Define the function \begin{equation}\label{def_Ftilde}\tilde{F}:\mathcal{S}^2\times\mathbb{R}^2\times\mathbb{R}\times \overline{D}\to\mathbb{R}\quad \text{by}\quad 
    \tilde{F}(Q, p, v, (k, h)) =-\theta v + H_{\emph{max}}(k,h,p,Q).
    \end{equation}
    Then there exists a  positive constant $\bar{C}>0$  such that, for every $Q=(Q_{ij})_{i,j}\in\mathcal{S}^2, \,(k,h)\in\overline{D},\, p=\left[\begin{smallmatrix}
        p_k\\p_h
    \end{smallmatrix}\right],
    \bar{p}=\left[\begin{smallmatrix}
        \bar{p}_k\\\bar{p}_h
        \end{smallmatrix}\right]\in\mathbb{R}^2$ and $v,\bar{v}\in\mathbb{R},$
        \begin{align}\label{ineq:Saf2}
|\tilde{F}(Q, p, v, (k, h)) - \tilde{F}(Q, \bar{p}, \bar{v}, (k, h))| \leq \bar{C}\big(|p_k - \bar{p}_k| + |p_h - \bar{p}_h| + |v - \bar{v}|\big),
\end{align}
and, for every $\alpha\in (0,1)$,
\begin{equation}
\label{eq_saf}
\langle \tilde{F}(Q, p, v, \cdot) \rangle_{D}^{\alpha} \leq \bar{C}\bigg( 1+\sum_{i,j=1}^2 |Q_{ij}| + |p_k| + |p_h| + |v| \bigg),
\end{equation} 
where
\begin{equation*}
\langle f\rangle^{\alpha}_{D} = \sup_{\substack{(k,h) \in D\\ \delta>0} } \delta^{-\alpha}\bigg( \sup_{D \cap B_{\delta}(k,h)} f - \inf_{D \cap B_{\delta}(k,h)} f \bigg),\quad \text{for a map  $f: D \to \mathbb{R}$.}
\end{equation*}
\end{lemma}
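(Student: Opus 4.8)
The plan is to work from the explicit form of the maximum value Hamiltonian. Writing $p=(p_k,p_h)^{\top}$, $Q=(Q_{ij})_{i,j=1}^{2}$ and combining \eqref{HJB_interm1}--\eqref{G_expression},
\[
H_{\text{max}}(k,h,p,Q)=Bk\,p_k-\rho h\,p_h+\tfrac12\big(\beta_1^{2}k^{2}Q_{11}+\beta_2^{2}h^{2}Q_{22}\big)+G(k,h,p_k,p_h),
\]
and the key structural fact is that, for fixed $(k,h)$, $G(k,h,\cdot,\cdot)$ is the supremum over $c\in(0,Rk]$ of affine maps of $(p_k,p_h)$ with gradients $(-c,\rho c)$. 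Since $\overline{D}$ is a compact subset of $\mathbb{R}^2_{++}$, all of $k,h,k^{-1},h^{-1}$ are bounded on a fixed neighbourhood of $\overline D$ inside $\mathbb{R}^2_{++}$ by constants depending only on $D$; I will write $C(D)$ for any such constant, and set $M:=\max_{\overline D}\max\{k,h\}$. Two elementary facts will be used repeatedly: $|\sup_c f_c-\sup_c g_c|\le\sup_c|f_c-g_c|$; and if $f$ is Lipschitz on a neighbourhood of $\overline D$ with constant $L$, then $\langle f\rangle^{\alpha}_{D}\le C(D)\,L$ uniformly in $\alpha\in(0,1)$, because the oscillation of $f$ on $D\cap B_{\delta}(k,h)$ is at most $L\min\{2\delta,\operatorname{diam}D\}$, while for $\delta$ larger than the distance from $\overline D$ to $\partial\mathbb{R}^2_{++}$ the factor $\delta^{-\alpha}$ makes the contribution bounded by the total oscillation of $f$ times a constant.

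To prove \eqref{ineq:Saf2}: the quadratic term drops out of $\tilde F(Q,p,v,(k,h))-\tilde F(Q,\bar p,\bar v,(k,h))$ because $Q$ is unchanged, leaving $-\theta(v-\bar v)+Bk(p_k-\bar p_k)-\rho h(p_h-\bar p_h)+\big(G(k,h,p_k,p_h)-G(k,h,\bar p_k,\bar p_h)\big)$. The first elementary fact, applied to the family in \eqref{HJB_interm1}, gives $|G(k,h,p_k,p_h)-G(k,h,\bar p_k,\bar p_h)|\le\sup_{c\in(0,Rk]}\big(c|p_k-\bar p_k|+\rho c|p_h-\bar p_h|\big)\le Rk\big(|p_k-\bar p_k|+|p_h-\bar p_h|\big)$, using $\rho<1$. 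Bounding $k,h\le M$ throughout yields \eqref{ineq:Saf2} with, e.g., $\bar C=\theta+(B+\rho+R)M$.

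To prove \eqref{eq_saf}: the term $-\theta v$ is constant in $(k,h)$ and contributes nothing to $\langle\cdot\rangle^{\alpha}_{D}$, so by subadditivity of the seminorm it suffices to estimate the seminorm of each of the four polynomial terms above and of $G(\cdot,\cdot,p_k,p_h)$. Each polynomial term is smooth in $(k,h)$ with gradient bounded near $\overline D$ by $C(D)$ times $|p_k|$, $|p_h|$, $|Q_{11}|$, $|Q_{22}|$ respectively, so the second elementary fact applies. For $G$ I use the two-branch formula \eqref{G_expression} with $q:=p_k-\rho p_h$ (if $q\le0$ only the first branch occurs): a direct differentiation gives, on $\{q\le h^{\gamma(\sigma-1)}(Rk)^{-\sigma}\}$, the bounds $|\partial_kG|\le R|q|+C(D)$ and $|\partial_hG|\le C(D)$, and, on $\{q>h^{\gamma(\sigma-1)}(Rk)^{-\sigma}\}$ (where necessarily $q>0$), the identities $\partial_kG\equiv0$ and $|\partial_hG|\le C(D)\,q^{1-1/\sigma}\le C(D)(1+q)$, since $\sigma>1$. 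A short computation (equivalently, an envelope-type argument) shows that the two branches glue in a $C^{1}$ fashion along the curve $\{q=h^{\gamma(\sigma-1)}(Rk)^{-\sigma}\}$, so $G(\cdot,\cdot,p_k,p_h)$ is $C^{1}$ on $\mathbb{R}^2_{++}$ with $|\nabla_{(k,h)}G|\le C(D)(1+|q|)\le C(D)(1+|p_k|+|p_h|)$ near $\overline D$; the second elementary fact then gives $\langle G\rangle^{\alpha}_{D}\le C(D)(1+|p_k|+|p_h|)$. Adding the five contributions and using $|Q_{11}|+|Q_{22}|\le\sum_{i,j}|Q_{ij}|$ and $|v|\ge0$ proves \eqref{eq_saf}, after enlarging $\bar C$ if needed; note that all constants can be taken independent of $\alpha\in(0,1)$.

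The only genuinely delicate point is the term $G$: one must check that the sublinear power $q^{1-1/\sigma}$ appearing in the second branch is still controlled linearly, which holds because $q^{1-1/\sigma}\le1+q$ (indeed, on $\overline D$ the quantity $q$ stays above a positive constant in that branch, so even $q^{1-1/\sigma}\le C(D)\,q$), and one must verify the $C^{1}$ matching of the two branches of $G$ across the free-boundary curve $\{q=h^{\gamma(\sigma-1)}(Rk)^{-\sigma}\}$. Everything else is the routine bookkeeping of differentiating polynomials and power functions of $(k,h)$ on the compact set $\overline D\subset\mathbb{R}^2_{++}$.
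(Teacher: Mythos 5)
Your proof is correct, and for the Lipschitz bound \eqref{ineq:Saf2} it is genuinely simpler than the paper's. The paper proves \eqref{ineq:Saf2} by splitting into cases according to which branch of the two-branch formula \eqref{G_expression} is active at $p$ and at $\bar p$, and in the mixed case inserts an intermediate point on the free boundary; within the ``both in the second branch'' case it also needs the observation that $x\mapsto x^{1-1/\sigma}$ is Lipschitz on $(\epsilon,\infty)$ because $p_k-\rho p_h$ is bounded away from $0$ there. Your route bypasses all of this: since $G(k,h,\cdot,\cdot)$ is a supremum over $c\in(0,Rk]$ of affine maps of $(p_k,p_h)$ with slopes $(-c,\rho c)$, the elementary bound $|\sup_c f_c-\sup_c g_c|\le\sup_c|f_c-g_c|$ immediately gives $|G(k,h,p)-G(k,h,\bar p)|\le Rk(|p_k-\bar p_k|+\rho|p_h-\bar p_h|)$ with no case distinction, and this is uniform over $\overline D$ by compactness. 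This is a cleaner and more robust argument. For \eqref{eq_saf}, your strategy matches the paper's (get a Lipschitz constant in $(k,h)$ linear in $1+\sum|Q_{ij}|+|p_k|+|p_h|$, then convert to a uniform $\alpha$-H\"older bound on a bounded set), and your version is in fact slightly sharper because you observe directly that $-\theta v$ has zero oscillation, so $|v|$ need not appear on the right, whereas the paper obtains the H\"older bound via ``Lipschitz plus bounded'' and therefore keeps $|v|$.

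Two minor remarks. First, the $C^1$ matching of the two branches of $G$ across the curve $\{q=h^{\gamma(\sigma-1)}(Rk)^{-\sigma}\}$ is not actually needed: $G$ is continuous by construction and piecewise $C^1$ with the gradient bounds you derived on each open branch, and a continuous piecewise $C^1$ function with uniformly bounded gradient on each piece of a domain split along a smooth curve is already Lipschitz with the same constant. So you can safely drop that verification. Second, when estimating $\partial_k G$ on the first branch you obtain $R|q|$; it is worth noting explicitly that $|q|=|p_k-\rho p_h|\le|p_k|+|p_h|$, so this stays linear as required for \eqref{eq_saf}, which you implicitly use but could state.
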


\begin{lemma}\label{lemm:userguide}
    Let  $D\subset \mathbb{R}_{++}^2$ be an  open and bounded set such that its closure $\overline{D}\subset \mathbb{R}^2_{++}$.   Define the function $\tilde{F}:\mathcal{S}^2\times\mathbb{R}^2\times\mathbb{R}\times D\to\mathbb{R}$ as in Lemma \ref{lemm:safonov}. Then 
       there is a constant $\widetilde{C}>0$ such that
        \begin{multline}\label{est_user}
             \tilde{F}(Q, \alpha(k-\bar{k}, h-\bar{h}), v,(k, h)) 
             -
             \tilde{F}(\bar{Q}, \alpha(k-\bar{k}, h-\bar{h}),v, (\bar{k}, \bar{h}))
             \\\leq \widetilde{C}(\alpha  |(k-\bar{k}, h-\bar{h})|^2 + |(k-\bar{k}, h-\bar{h})|),
        \end{multline}
        for every $\alpha>0$,  $(k,h),\,(\bar{k},\bar{h})\in D,\,v\in\mathbb{R}$ and $Q,\bar{Q}\in\mathcal{S}^2$ satisfying
      \begin{equation}\label{ineq:QIQ}
           \begin{bmatrix}
                Q && 0 \\
                0 && -\bar{Q} 
            \end{bmatrix}\leq 3\alpha \begin{bmatrix}
                I && -I \\
                -I && I 
            \end{bmatrix}.
        \end{equation}
\end{lemma}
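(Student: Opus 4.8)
The plan is to use the additive decomposition
$H_{\mathrm{max}}(k,h,p,Q)=Bk\,p_k-\rho h\,p_h+\tfrac12\mathrm{tr}\big(\Sigma(k,h)^\top Q\,\Sigma(k,h)\big)+G(k,h,p_k,p_h)$
and to estimate the three pieces (drift, diffusion, Hamiltonian term $G$) separately; the $-\theta v$ in $\tilde F$ cancels because the same $v$ appears on both sides of \eqref{est_user}. Throughout I write $x=(k,h)$, $\bar x=(\bar k,\bar h)$, $p=\alpha(x-\bar x)$, so that $|p_k|,|p_h|\le\alpha|x-\bar x|$, and I use that $\overline D$ is compact and contained in $\mathbb R^2_{++}$, hence $k_{\min}:=\min_{\overline D}k>0$, $h_{\min}:=\min_{\overline D}h>0$, $k_{\max},h_{\max}<\infty$, and $|x-\bar x|\le\mathrm{diam}(D)$.

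For the diffusion term, the diagonal form of $\Sigma$ gives $\mathrm{tr}(\Sigma(x)^\top Q\,\Sigma(x))=\sum_{j=1}^2\langle Q\,\Sigma(x)e_j,\Sigma(x)e_j\rangle$, with $e_1,e_2$ the canonical basis of $\mathbb R^2$. Applying the inequality \eqref{ineq:QIQ} to the vector $(\Sigma(x)e_j,\Sigma(\bar x)e_j)^\top\in\mathbb R^4$ collapses its right-hand side to $3\alpha|(\Sigma(x)-\Sigma(\bar x))e_j|^2$, and since $x\mapsto\Sigma(x)$ is linear, summing over $j$ yields $\mathrm{tr}(\Sigma(x)^\top Q\,\Sigma(x))-\mathrm{tr}(\Sigma(\bar x)^\top\bar Q\,\Sigma(\bar x))\le 3\alpha\max(\beta_1^2,\beta_2^2)|x-\bar x|^2$. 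The drift contribution is $Bk\,p_k-\rho h\,p_h-(B\bar k\,p_k-\rho\bar h\,p_h)=B\alpha(k-\bar k)^2-\rho\alpha(h-\bar h)^2\le\max(B,\rho)\,\alpha|x-\bar x|^2$. Both are already of the required form.

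The main work — and the main obstacle — is the term $G(k,h,p_k,p_h)-G(\bar k,\bar h,p_k,p_h)$, because $G$ is not Lipschitz in $(k,h)$ uniformly in $p$: it contains the factor $h^{\gamma(\sigma-1)}c^{1-\sigma}$ and switches between the two regimes of \eqref{G_expression}. I would use $G(k,h,p_k,p_h)=\sup_{c\in(0,Rk]}g(c;h,p_k,p_h)$ with $g(c;h,p_k,p_h)=-c(p_k-\rho p_h)-\tfrac1{\sigma-1}h^{\gamma(\sigma-1)}c^{1-\sigma}$, which depends on $k$ only through the admissible interval, and recall that this supremum is attained at $c^*=Rk$ when $p_k-\rho p_h\le 0$ and at $c^*=\min\{Rk,(h^{\gamma(\sigma-1)}/(p_k-\rho p_h))^{1/\sigma}\}$ otherwise. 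Fix such a maximizer $c^*$ at $x$. If $c^*\le R\bar k$, then $c^*$ is admissible at $\bar x$, so $G(\bar k,\bar h,p)\ge g(c^*;\bar h,p)$ and $G(k,h,p)-G(\bar k,\bar h,p)\le g(c^*;h,p)-g(c^*;\bar h,p)=-\tfrac1{\sigma-1}(c^*)^{1-\sigma}\big(h^{\gamma(\sigma-1)}-\bar h^{\gamma(\sigma-1)}\big)$; here $s\mapsto s^{\gamma(\sigma-1)}$ is Lipschitz on $[h_{\min},h_{\max}]$, while either $c^*=Rk$, whence $(c^*)^{1-\sigma}\le(Rk_{\min})^{1-\sigma}$, or $c^*$ is the interior critical point, whence $(c^*)^{1-\sigma}=\big((p_k-\rho p_h)/h^{\gamma(\sigma-1)}\big)^{(\sigma-1)/\sigma}\le C(\alpha|x-\bar x|)^{(\sigma-1)/\sigma}$. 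If instead $c^*>R\bar k$ — which forces $k>\bar k$ — then $g(\cdot;h,p)$ is increasing on $(0,R\bar k]$ (by unimodality of $g$ in $c$), so $G(\bar k,\bar h,p)\ge g(R\bar k;\bar h,p)$, and I split $g(c^*;h,p)-g(R\bar k;\bar h,p)$ into a $c$-increment $g(c^*;h,p)-g(R\bar k;h,p)$, bounded by the mean value theorem on $[R\bar k,Rk]\subset[Rk_{\min},Rk_{\max}]$ where $|\partial_c g|\le(1+\rho)\alpha|x-\bar x|+h_{\max}^{\gamma(\sigma-1)}(Rk_{\min})^{-\sigma}$, and an $h$-increment $g(R\bar k;h,p)-g(R\bar k;\bar h,p)$ with $c=R\bar k$ bounded below, handled exactly as in the first case.

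In every case the resulting bound on the $G$-difference has the form $C|x-\bar x|+C(\alpha|x-\bar x|)^{(\sigma-1)/\sigma}|x-\bar x|$, and the weighted arithmetic-geometric mean inequality (equivalently Young's inequality with conjugate exponents $\tfrac{\sigma}{\sigma-1}$ and $\sigma$) gives $(\alpha|x-\bar x|)^{(\sigma-1)/\sigma}|x-\bar x|=(\alpha|x-\bar x|^2)^{(\sigma-1)/\sigma}|x-\bar x|^{1/\sigma}\le\tfrac{\sigma-1}{\sigma}\alpha|x-\bar x|^2+\tfrac1\sigma|x-\bar x|$. Adding the diffusion, drift, and $G$ contributions and collecting constants produces \eqref{est_user}. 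The delicate point, as indicated, is the control of $G$: one must track whether the optimal consumption sits at the (small, $\alpha$-dependent) interior critical point — where a genuinely sublinear Hölder-type estimate in $\alpha$ appears and must be absorbed by Young's inequality — or at the boundary value $Rk$, which stays away from $0$ thanks to $\overline D\subset\mathbb R^2_{++}$.
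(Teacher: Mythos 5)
Your proposal is correct, and it handles the drift and diffusion terms essentially as the paper does (the paper multiplies the block inequality \eqref{ineq:QIQ} by the positive semidefinite matrix $\bigl[\begin{smallmatrix}\Sigma^\top\Sigma & \bar\Sigma^\top\Sigma\\\Sigma^\top\bar\Sigma & \bar\Sigma^\top\bar\Sigma\end{smallmatrix}\bigr]$ and takes traces, which is the ``summed'' form of your vector-by-vector argument). Where you genuinely diverge is in the treatment of the Hamiltonian term $G$. The paper works directly with the explicit two-piece formula \eqref{G_expression}, splitting into cases $(A_1)$, $(A_2)$, $(B_1)$, $(B_2)$ according to which branch is active at $(k,h)$ versus $(\bar k,\bar h)$ for the common gradient $p=\alpha(x-\bar x)$, and then bounds the branch-by-branch difference, absorbing the sublinear power $(\alpha|x-\bar x|)^{1-1/\sigma}$ in case $(A_2)$ by noting that the branch condition itself forces $\alpha|x-\bar x|$ to be bounded away from zero on $\overline D$. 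You instead exploit the envelope structure $G=\sup_{c\in(0,Rk]}g$: you freeze a maximizer $c^*$ at $(k,h)$, compare it against an admissible competitor at $(\bar k,\bar h)$ (either $c^*$ itself or $R\bar k$), and split into a $c$-increment and an $h$-increment; the sublinear term that arises when $c^*$ is the interior critical point is absorbed by Young's inequality with exponents $\sigma/(\sigma-1)$ and $\sigma$ rather than by the branch-condition lower bound. Both routes are sound; yours avoids ever writing the piecewise closed form of $G$ and so sidesteps the four-way case split, at the cost of a slightly more delicate bookkeeping of where the optimal $c^*$ sits relative to $Rk$ and $R\bar k$. One small inaccuracy in your summary sentence: in the subcase $c^*>R\bar k$ the $G$-difference is bounded directly by $C\alpha|x-\bar x|^2+C|x-\bar x|$ via the mean value theorem, not by the sublinear form you then feed into Young; but this does not affect the conclusion.
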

\noindent 
The proofs of Lemmas \ref{lemm:safonov} and \ref{lemm:userguide} are postponed to Appendix \ref{Appendix}.


    \section{Properties of the Value function $V$}\label{sec_regularity}
	This section is devoted to the analysis of the value function $V$ defined in \eqref{def:valuefunction}. We will show, in particular, that $V$ is finite valued and continuous, from which we deduce the dynamic programming principle in Proposition \ref{DPP}.\\ 
    We start with a  preliminary result concerning the homogeneity of $V$ in $\mathbb{R}^2_{++}$.
		\begin{lemma} \label{lemma_hom} For every $\alpha>0$,
        \begin{equation}\label{eq_homo}
	    V(\alpha k_0,\alpha h_0)=\alpha^{(1-\gamma)(1-\sigma)}V(k_0,h_0),\quad (k_0,h_0)\in\mathbb{R}^2_{++}.
	\end{equation}
	\end{lemma}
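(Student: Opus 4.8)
The plan is to exploit the linearity of the state equations \eqref{eq:k}--\eqref{eq:h} together with the homogeneity of the running utility, via a simple rescaling argument. Fix $\alpha>0$ and $(k_0,h_0)\in\mathbb{R}^2_{++}$. First I would show that the map $c\mapsto\alpha c$ is a bijection from $\mathcal{A}(k_0,h_0)$ onto $\mathcal{A}(\alpha k_0,\alpha h_0)$. Progressive measurability, positivity and local integrability are clearly preserved under multiplication by the positive constant $\alpha$. The crucial observation is that, by uniqueness of solutions (or directly from \eqref{expl_eq_k}--\eqref{expl_eq_h}), one has
\[
k^{\alpha k_0,0,\alpha c}=\alpha\,k^{k_0,0,c},\qquad h^{\alpha h_0,0,\alpha c}=\alpha\,h^{h_0,0,c},
\]
up to indistinguishability: indeed, if $(k_t)$ solves \eqref{eq:k} with control $c$ and initial datum $k_0$, then $\de(\alpha k_t)=\big(B(\alpha k_t)-\alpha c_t\big)\de t+\beta_1(\alpha k_t)\de W_{1,t}$ with initial datum $\alpha k_0$, i.e. $\alpha k$ solves \eqref{eq:k} with control $\alpha c$; similarly for $h$. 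Hence the positivity constraints \eqref{constraints-kh} are preserved (as $\alpha>0$), and $c\le R\,k$ holds $\mathbb{P}\otimes\de t$-a.e. if and only if $\alpha c\le R(\alpha k)$ does, so \eqref{bdd_control} transfers as well. This proves $c\in\mathcal{A}(k_0,h_0)\iff\alpha c\in\mathcal{A}(\alpha k_0,\alpha h_0)$.

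Next I would track the effect of the rescaling on the objective \eqref{def_J}. Writing $k_t=k^{k_0,0,c}_t$ and $h_t=h^{h_0,0,c}_t$, from the previous step the state pair associated with $\alpha c$ and $(\alpha k_0,\alpha h_0)$ is $(\alpha k_t,\alpha h_t)$, and
\[
\frac{\alpha c_t}{(\alpha h_t)^\gamma}=\alpha^{1-\gamma}\,\frac{c_t}{h_t^\gamma},\qquad\text{so}\qquad\frac{1}{1-\sigma}\Big(\frac{\alpha c_t}{(\alpha h_t)^\gamma}\Big)^{1-\sigma}=\alpha^{(1-\gamma)(1-\sigma)}\,\frac{1}{1-\sigma}\Big(\frac{c_t}{h_t^\gamma}\Big)^{1-\sigma}.
\]
Multiplying by $e^{-\theta t}$, integrating over $[0,\infty)$ and taking expectations, and using that $\alpha^{(1-\gamma)(1-\sigma)}>0$ (with the convention $\alpha^{(1-\gamma)(1-\sigma)}\cdot(-\infty)=-\infty$ in case the integral diverges), we obtain
\[
J(\alpha k_0,\alpha h_0;\alpha c)=\alpha^{(1-\gamma)(1-\sigma)}\,J(k_0,h_0;c).
\]

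Finally, taking the supremum over $c\in\mathcal{A}(k_0,h_0)$, using the bijection of the first step to reindex the supremum defining $V(\alpha k_0,\alpha h_0)$ and the positivity of the scaling factor to pull it out, gives
\[
V(\alpha k_0,\alpha h_0)=\sup_{c\in\mathcal{A}(k_0,h_0)}J(\alpha k_0,\alpha h_0;\alpha c)=\alpha^{(1-\gamma)(1-\sigma)}\sup_{c\in\mathcal{A}(k_0,h_0)}J(k_0,h_0;c)=\alpha^{(1-\gamma)(1-\sigma)}V(k_0,h_0),
\]
which is \eqref{eq_homo}. I do not expect a genuine obstacle in this proof: it is a routine homogeneity computation, and the only mildly delicate points are the identification of the rescaled processes as solutions of the rescaled SDEs (settled by uniqueness) and the bookkeeping of the constraint \eqref{bdd_control} under the scaling $c\mapsto\alpha c$, both of which are immediate.
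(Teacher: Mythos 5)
Your proof is correct and takes essentially the same route as the paper: both establish $\mathcal{A}(\alpha k_0,\alpha h_0)=\alpha\mathcal{A}(k_0,h_0)$ and $h^{\alpha h_0,0,\alpha c}=\alpha h^{h_0,0,c}$ (the paper via the explicit formulas \eqref{expl_eq_k}--\eqref{expl_eq_h}, you also via uniqueness of solutions, which is equivalent), then pull the positive factor $\alpha^{(1-\gamma)(1-\sigma)}$ out of the supremum defining $V$. The paper additionally invokes Lemma \ref{set_controls} to note that the admissible set is independent of $h_0$, but this is not essential to the argument and your more explicit verification of the bijection covers the same ground.
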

	\begin{proof}
Fix $\alpha>0$ and an initial condition $(k_0,h_0)\in\mathbb{R}^2_{++}.$ Recall that the definition of the sets $\mathcal{A}(k_0, h_0)$ and $\mathcal{A}(\alpha k_0,\alpha h_0)$ of admissible controls is given in \eqref{def:setofcontrols}. According to \eqref{indep_h0} in Lemma \ref{set_controls}, these sets only depend on $k_0$ and $\alpha$, i.e., they are independent of $h_0$. Moreover, by the explicit expression of the capital $k$ in \eqref{expl_eq_k}, it is straightforward to conclude that
\[
\mathcal{A}(\alpha k_0,\,\alpha h_0)=\alpha \mathcal{A}(k_0, h_0).
\]
  Using the notation introduced in \eqref{notation_dependence}, by formula \eqref{expl_eq_h},
  \[
   h^{\alpha h_0,0,\alpha c}_t=\alpha 
    h^{ h_0,0, c}_t,\quad t\ge0,\mathbb{P}-\text{a.s., for every }c\in \mathcal A(\alpha k_0,\alpha h_0).
  \] 
  As a result, also using \eqref{def_J} and \eqref{def:valuefunction}, 
		\begin{align*}
			V(\alpha k_0,\alpha h_0) &= \sup_{c\in\mathcal{A}(\alpha k_0,\alpha h_0)}\mathbb{E}\bigg[\int_{0}^\infty \frac{1}{1-\sigma}\bigg(\frac{{(h^{\alpha h_0,0,c}_t)}^\gamma}{c_t}\bigg)^{\sigma-1}e^{-\theta t}\de t\bigg]\\
			&= \sup_{ c\in\mathcal{A}(k_0,h_0)}\mathbb{E}\bigg[\int_{0}^\infty \frac{1}{1-\sigma}\bigg(\frac{{(h^{\alpha h_0,0,\alpha c}_t)}^\gamma}{ \alpha c_t}\bigg)^{\sigma-1}e^{-\theta t}\de t\bigg]\\
			&= \sup_{ c\in\mathcal{A}(k_0,h_0)}\mathbb{E}\bigg[\int_{0}^\infty \frac{1}{1-\sigma}\bigg(\frac{{(\alpha h^{h_0,0,c}_t)}^\gamma}{ \alpha c_t}\bigg)^{\sigma-1}e^{-\theta t}\de t\bigg] =\alpha^{(1-\gamma)(1-\sigma)}V(k_0,h_0).
		\end{align*}
        The proof is then complete.
	\end{proof}
	
	Since the utility function $J$ in \eqref{def_J} is nonpositive, by the definition \eqref{def:valuefunction}, $V\le 0$ in $\mathbb{R}^2_{++}$, as well. Recalling our standing Assumption \ref{assumption1}, the following proposition gives a lower bound for $V$. In particular, we deduce that $V(k_0,h_0)\in (-\infty,0]$ (i.e., $V(k_0,h_0)$ is finite) for every $(k_0,h_0)\in\mathbb{R}^2_{++}.$
	\begin{proposition}\label{prop:lbound}
	  There exists a positive constant $C=C(\sigma,\gamma,\theta, \rho,\beta_1,\beta_2,B)>0$ such that 
		\begin{equation}\label{eq_lower}
			 V(k_0,h_0)\geq -C\bigg(\left(\frac{h_0^{\gamma}}{k_0}\right)^{\sigma-1}+\frac{1}{k_0^{(1-\gamma)(\sigma-1)}}\bigg),\quad (k_0,h_0)\in\mathbb{R}^2_{++}.
		\end{equation}
	\end{proposition}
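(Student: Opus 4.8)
The plan is to exhibit, for each initial point $(k_0,h_0)$, a single admissible control whose utility is bounded below by the right-hand side of \eqref{eq_lower}, up to a multiplicative constant. The natural candidate is the proportional-to-capital strategy already analyzed in the proof of Lemma \ref{set_controls}, namely $c = \nu\,\mathcal{E}(\beta_1 W_1)$ with $\nu$ a suitable fraction of $Bk_0$. For this choice, \eqref{expl_k_c} gives a clean lower bound $k_t \ge (k_0 - \nu/B)\,\mathcal{E}(\beta_1 W_{1,t})$, and more importantly $c_t$ is an explicit geometric-Brownian-type process, so the ratio $h_t^\gamma/c_t$ appearing in \eqref{def_J} can be estimated by hand. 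Since $J$ depends on $c$ only through $h^\gamma/c$ and $h$ is linear in $c$ (see the scaling used in Lemma \ref{lemma_hom}), it suffices to control $\mathbb{E}\big[\int_0^\infty (h_t^\gamma/c_t)^{\sigma-1}e^{-\theta t}\,\de t\big]$ for this one strategy.

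\textbf{Key steps.} First I would fix $\nu = \lambda B k_0$ for a small $\lambda\in(0,1)$ depending only on the structural constants (so that both conditions on $\nu$ in Lemma \ref{set_controls} hold — this is where $R\ge B$ from Assumption \ref{assumption1} enters), making $c\in\mathcal{A}(k_0)$. Second, I would substitute the explicit expression \eqref{expl_eq_h} for $h^{h_0,0,c}$ with this $c$: one gets $h_t = \mathcal{E}(-\rho t + \beta_2 W_{2,t})\big(h_0 + \rho\nu\int_0^t \mathcal{E}(\beta_1 W_{1,s})/\mathcal{E}(-\rho s + \beta_2 W_{2,s})\,\de s\big)$. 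Then $h_t^\gamma/c_t$ factors (using $c_t = \nu\mathcal{E}(\beta_1 W_{1,t})$) into a product of Doléans–Dade exponentials and the bracket raised to the power $\gamma$; bounding $(a+b)^\gamma \le a^\gamma + b^\gamma$ for $\gamma\in[0,1)$ splits the estimate into two pieces, one carrying the $h_0^\gamma$ contribution and one carrying only structural constants and $\nu$ (equivalently $k_0$). Third, I would raise to the power $\sigma-1$, take expectations, and use the moment generating function of Brownian motion together with Fubini/Jensen to turn each time-integral into an explicit integral of the form $\int_0^\infty e^{(\text{exponent})\, t}\,\de t$; finiteness of this integral is exactly the content of Assumption \ref{assumption1}, with the two cases \eqref{minore}/\eqref{maggiore} corresponding to the sign of $\rho + \frac12(\beta_2^2 - \beta_1^2)$ (which governs whether the drift term or the diffusion term dominates in the exponent of the cross-exponential inside the bracket). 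Finally, tracking the dependence on $k_0$ and $h_0$ through $\nu = \lambda B k_0$ and the homogeneity already recorded in Lemma \ref{lemma_hom} yields precisely the two terms $(h_0^\gamma/k_0)^{\sigma-1}$ and $k_0^{-(1-\gamma)(\sigma-1)}$.

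\textbf{Main obstacle.} The delicate point is the moment estimate for the bracket term $\rho\nu\int_0^t \mathcal{E}(\beta_1 W_{1,s})/\mathcal{E}(-\rho s + \beta_2 W_{2,s})\,\de s$ raised to the power $\gamma(\sigma-1)$, which may exceed $1$. One cannot simply pull the power inside the time-integral; instead I would use that the two Brownian motions are independent, apply a Minkowski-type inequality in $L^{\gamma(\sigma-1)\vee 1}$ (this is where the $\max\{2, 1/(\gamma(\sigma-1))\}$ factor in Assumption \ref{assumption1} originates — it ensures the relevant $L^p$ norm is taken with $p\ge 1$ and the exponential moments exist), and then compute the resulting one-dimensional exponential moments explicitly. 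Keeping careful track of which exponential-moment bound is used, and matching the resulting exponent to the left-hand sides of \eqref{eq_minore}–\eqref{eq_maggiore}, is the real bookkeeping burden; once the correct $p$ is chosen the integrals are elementary. The extraction of the sharp $k_0$- and $h_0$-powers at the end is then a routine homogeneity check against Lemma \ref{lemma_hom}.
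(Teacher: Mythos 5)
Your approach is essentially the same as the paper's: construct the explicit proportional-to-capital strategy $c_t=\nu\,\mathcal{E}(\beta_1W_{1,t})$, substitute the closed-form expressions \eqref{expl_eq_k}--\eqref{expl_eq_h}, and reduce the utility to explicit exponential moments of Brownian motion whose time-integrals converge precisely under Assumption \ref{assumption1}. (The paper takes the borderline choice $\nu=Bk_0$, which makes $k_t=c_t/B$, but your $\nu=\lambda Bk_0$ with $\lambda<1$ works the same way and gives the same $k_0$-powers.) The one step you gloss over is that after substituting and raising to the power $\sigma-1$, the integrand is a \emph{product} of the prefactor $\mathcal{E}(-\rho t+\beta_2W_{2,t})^{\gamma(\sigma-1)}\mathcal{E}(\beta_1 W_{1,t})^{1-\sigma}$ and the bracket raised to $\gamma(\sigma-1)$, and these two factors are not independent (both contain $W_1$ and $W_2$); ``take expectations and use the MGF'' does not go through until you first decouple them. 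The paper does this with an explicit H\"older split (exponents $p,q$ with $q>\max\{2,1/(\gamma(\sigma-1))\}$), and only then uses a second H\"older with exponents $p^*,q^*$ (where $q^*=q\gamma(\sigma-1)>1$) on the inner time-integral -- that is where the $\max\{2,1/(\gamma(\sigma-1))\}$ in Assumption \ref{assumption1} actually originates, rather than from a Minkowski-in-$L^{\gamma(\sigma-1)\vee1}$ step as you suggest. Your proposed Minkowski route would likely also close, but you would need to re-derive the precise exponent threshold to check that it matches \eqref{eq_minore}--\eqref{eq_maggiore}; as written, the attribution of the $\max$-constant to Minkowski does not line up with how the assumption is stated. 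Once the H\"older decoupling is inserted, your remaining steps (elementary power inequalities to split the bracket, MGF/Tonelli to compute the time-integrals, case split on the sign of $\rho+\tfrac12(\beta_2^2-\beta_1^2)$, and the final $k_0,h_0$ bookkeeping) match the paper's proof.
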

	\begin{proof}
		Fix $(k_0,h_0)\in\mathbb{R}^2_{++}$ and consider the control $c_t=Bk_0\mathcal{E}(\beta_1W_{1,t}),\,t\ge 0$. According to \eqref{expl_k_c}, the corresponding capital process $k$ is 
		\begin{equation*}
			k_t = k_0\mathcal{E}(\beta_1W_{1,t})=\frac{c_t}{B}>0,\quad t\ge 0.
        \end{equation*}
	Since $B\le R$ by Assumption \ref{assumption1}, the constraint \eqref{bdd_control} is fulfilled. Thus, also recalling \eqref{indep_h0} in Lemma \ref{set_controls}, we have $c\in\mathcal{A}(k_0,h_0).$	
    Using the explicit expressions of the habit process $h$ in \eqref{expl_eq_h} and the Doléans-Dade exponential  of a continuous  semimartingale, by \eqref{def:valuefunction},
		\begin{align*}
			V(k_0,h_0)&\geq -\frac{(Bk_0)^{1-\sigma}}{\sigma-1}\mathbb{E} \bigg[\int_0^\infty e^{-\theta t} 
            e^{(1-\sigma)\big[(\gamma\rho+\frac{1}{2}(\gamma\beta_2^2-\beta_1^2))t+\beta_1W_{1,t}-\gamma\beta_2 W_{2,t}\big]}\\&
\hspace{3cm}\times\Big(h_0+\rho Bk_0\int_0^te^{(\rho+\frac{1}{2}(\beta_2^2-\beta_1^2))s+\beta_1 W_{1,s}-\beta_2 W_{2,s}}\de s \Big)^{\gamma(\sigma-1)}\de t\bigg]\\&
\eqqcolon-\frac{(Bk_0)^{1-\sigma}}{\sigma-1}\mathbb{E}\bigg[\int_0^\infty \mathbf{\upperRomannumeral{1}}_t\times \mathbf{\upperRomannumeral{2}}_t\,\de t\bigg].
		\end{align*}
		By Tonelli's theorem  and H\"older's inequality, it then follows that  
		\begin{equation}\label{bound_V_inter1}
        V(k_0,h_0)\ge -\frac{(Bk_0)^{1-\sigma}}{\sigma-1}
			  \int_0^\infty \big(\mathbb{E}\big[|\mathbf{\upperRomannumeral{1}}_t|^p\big]\big)^{\frac{1}{p}}
              \big(\mathbb{E}\big[|\mathbf{\upperRomannumeral{2}}_t|^q\big]\big)^{\frac1q}\de t   ,
		\end{equation}
        where we pick $q>2$ such that $q\gamma(\sigma-1)>1$, and $p>1$ is its H\"older's conjugate, that is, $p={q}{(q-1)^{-1}}$. In particular, notice that $q>2$ entails $p<q$.  
	Since $W_1$ and $W_2$ are independent Brownian motions and, for every $t>0$, the random variables $e^{\beta_1W_{1,t}}$ and $e^{-\gamma\beta_2W_{2,t}}$ have a $\log-$normal distribution,	
		\begin{equation*}
			\mathbb{E}[|\mathbf{\upperRomannumeral{1}}_t|^p]= \exp\left\{p(1-\sigma)\left(\gamma\rho +\frac{1}{2}(\gamma \beta_2^2-\beta_1^2)+\frac{\theta}{\sigma-1}+\frac{1}{2}p(1-\sigma)(\beta_1^2+\gamma^2\beta_2^2)\right)t\right\},\quad t\ge0.
		\end{equation*}
As for $\mathbf{\upperRomannumeral{2}}$, 	we compute, defining $q^*=q\gamma(\sigma-1)>1$ and denoting by $p^*$ its H\"older's conjugate, i.e., $p^\ast = q^*(q^*-1)^{-1},$
		\begin{align*}
			\mathbb{E}[|\mathbf{\upperRomannumeral{2}}_t|^q]&\leq 2^{q^*-1}\bigg(
            h_0^{q^*}+(\rho Bk_0)^{q^*}\mathbb{E}\bigg[\bigg(\int_0^te^{(\rho+\frac{1}{2}(\beta_2^2-\beta_1^2))s+\beta_1 W_{1,s}-\beta_2 W_{2,s}}\de s \bigg)^{q^*}\bigg]\bigg) \\
            &\le 
            2^{q^*-1}\bigg(
            h_0^{q^*}+(\rho Bk_0)^{q^*}\bigg(\int_0^te^{p^*(\rho+\frac{1}2(\beta_2^2-\beta_1^2))s}\de s\bigg)^{\frac{q^*}{p^*}}\mathbb{E}\bigg[\int_0^te^{q^*(\beta_1 W_{1,s}-\beta_2 W_{2,s})}\de s \bigg]\bigg)\\&=
             2^{q^*-1}\bigg(
            h_0^{q^*}+(\rho Bk_0)^{q^*}\bigg(\int_0^te^{p^*(\rho+\frac{1}2(\beta_2^2-\beta_1^2))s}\de s\bigg)^{\frac{q^*}{p^*}}\int_0^te^{\frac12(q^*)^2(\beta_1^2 +\beta_2^2)s}\de s \bigg)\\&
            \le 
                2^{q^*-1}\bigg(
            h_0^{q^*}+C_1k_0^{q^*}
            \bigg(\int_0^te^{p^*(\rho+\frac{1}2(\beta_2^2-\beta_1^2))s}\de s\bigg)^{\frac{q^*}{p^*}}
            \exp\left\{\frac12(q^*)^2(\beta_1^2 +\beta_2^2)
            t\right\}\bigg)
            ,\quad t\ge0,
		\end{align*}
        for some constant $C_1=C_1(\rho,\beta_1,\beta_2,q^\ast, B)>0$.
		Here, in the second estimate we apply H\"older's inequality with exponents $p^*$ and $q^*$, and in the equality, use the independence of $W_1$ and $W_2$, together with Fubini's theorem and the properties of the $\log-$normal distribution. Plugging the two previous estimates on $\mathbf{\upperRomannumeral{1}}$ and $\mathbf{\upperRomannumeral{2}}$ into \eqref{bound_V_inter1} and recalling that $q^*/q=\gamma(\sigma-1),$ using also that the power $x^{\frac{1}{q}}$ is subadditive on $[0,\infty),$ we obtain
		\begin{align}\label{recall_cases}
        \notag
       & V(k_0,h_0)\ge -C_2\,k_0^{1-\sigma}
\int_0^\infty  
\exp\left\{(1-\sigma)\left(\gamma\rho +\frac{1}{2}(\gamma \beta_2^2-\beta_1^2)+\frac{\theta}{\sigma-1}+\frac{1}{2}p(1-\sigma)(\beta_1^2+\gamma^2\beta_2^2)\right)t\right\}\\
			&\quad\times 
        \bigg(
            h_0^{\gamma(\sigma-1)}+k_0^{\gamma(\sigma-1)}
               \bigg(\int_0^te^{p^*(\rho+\frac{1}2(\beta_2^2-\beta_1^2))s}\de s\bigg)^{\frac{\gamma(\sigma-1)}{p^*}}
               \exp\left\{
            \frac12q\gamma^2(\sigma-1)^2(\beta_1^2 +\beta_2^2)
            t\right\}\bigg)\de t,
		\end{align}
        for a constant $C_2=C_2(\sigma,q, \rho,\beta_1,\beta_2,q^\ast, B)>0.$ \vspace{2mm}
        
        From now on, we denote by $C_i,\,i=3,4,\dots,$ positive constant possibly dependent on $(\sigma,q,\rho,\beta_1,\beta_2,p^\ast,q^\ast , B)$.
We now argue that the integral on the right-hand side of \eqref{recall_cases} converges thanks to Assumption \ref{assumption1}. To do this, we distinguish two cases. \\
If $\rho+\frac{1}{2}(\beta_2^2-\beta_1^2)\le 0$, then
\[
\int_0^te^{p^*(\rho+\frac{1}2(\beta_2^2-\beta_1^2))s}\de s\le \max\{1, t\},\quad t\ge 0.
\]
Hence, recalling that $q>p$ and $\gamma\in[0,1)$, we deduce from \eqref{recall_cases} that
\begin{align*}
&V(k_0,h_0)\ge 
-C_3\bigg(\left(\frac{h_0^{\gamma}}{k_0}\right)^{\sigma-1}+\frac{1}{k_0^{(1-\gamma)(\sigma-1)}}\bigg)
\int_0^\infty \max\{1,t\} \exp\left\{(\sigma-1)\,e_1(q)
            \,t\right\}\de t,
\\
			&\qquad \text{where } e_1(q)\coloneqq  
            -\gamma\rho-\frac{1}{2}(\gamma\beta_2^2-\beta_1^2)-\frac{\theta}{\sigma-1}+
            q(\sigma-1)(\beta_1^2 +\gamma^2\beta_2^2)
            .
\end{align*}
Condition \eqref{eq_minore} in Assumption \ref{assumption1}(\ref{minore}) enables us to find a $q>\max\left\{2,\frac{1}{\gamma(\sigma-1)}\right\}$ such that $e_1(q)<0$,
whence \eqref{eq_lower}.\\
In the case $\rho+\frac{1}{2}(\beta_2^2-\beta_1^2)> 0$, using again the fact that $q>p$ and $\gamma\in[0,1)$, from \eqref{recall_cases} we obtain the estimate
\begin{align*}
&V(k_0,h_0)\ge 
-C_4\bigg(\left(\frac{h_0^{\gamma}}{k_0}\right)^{\sigma-1}+\frac{1}{k_0^{(1-\gamma)(\sigma-1)}}\bigg)
\int_0^\infty \exp\left\{(\sigma-1)\,e_2(q)
            \,t\right\}\de t,
\\
			&\qquad \text{where } e_2(q)\coloneqq 
            \frac{1}2\beta_1^2(1-\gamma)-\frac{\theta}{\sigma-1}+
            q(\sigma-1)(\beta_1^2 +\gamma^2\beta_2^2)
            .
\end{align*}
As before, Condition \eqref{eq_maggiore} in Assumption \ref{assumption1}(\ref{maggiore}) enables us to find a $q>\max\left\{2,\frac{1}{\gamma(\sigma-1)}\right\}$ such that $e_2(q)<0$,
whence \eqref{eq_lower}. 
The proof is now complete. 
	\end{proof}
	
	The following proposition shows the monotonicity of the value function $V$ in $\mathbb{R}^2_{++}$ with respect to its variables. 
	\begin{proposition}\label{prop:Vmonotone}
		  The value function $V$ is nondecreasing in the capital variable $k$ and nonincreasing in the habit variable $h$. 
	\end{proposition}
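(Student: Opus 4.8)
The plan is to reduce everything to two structural features of the problem: the running reward in \eqref{def_J} depends on the state only through the habit $h_t$, and is monotone in it; while the admissible set $\mathcal{A}(k_0,h_0)=\mathcal{A}(k_0)$ is independent of $h_0$ (Lemma \ref{set_controls}) and only enlarges as $k_0$ increases. I would handle the two monotonicities separately, each by a pathwise comparison of controls.

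For the monotonicity in $h$, I would fix $k_0>0$ and $0<h_0\le h_0'$, note that the two admissible sets coincide with $\mathcal{A}(k_0)$ by Lemma \ref{set_controls}, and then observe from the closed form \eqref{expl_eq_h} that $h_0\mapsto h^{h_0,0,c}_t$ is pathwise nondecreasing for any fixed $c$. Since $\gamma\ge0$ and $\sigma-1>0$, the map $x\mapsto(x^\gamma/c_t)^{\sigma-1}$ is nondecreasing on $(0,\infty)$, so the (nonnegative) expectation appearing in \eqref{def_J} is nondecreasing in $h_0$; multiplying by $-\tfrac{1}{\sigma-1}<0$ gives $J(k_0,h_0';c)\le J(k_0,h_0;c)$ for every admissible $c$, and taking the supremum over the common set $\mathcal{A}(k_0)$ yields $V(k_0,h_0')\le V(k_0,h_0)$.

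For the monotonicity in $k$, I would fix $h_0>0$ and $0<k_0\le k_0'$ and first prove the inclusion $\mathcal{A}(k_0)\subseteq\mathcal{A}(k_0')$: from \eqref{expl_eq_k}, $k^{k_0',0,c}_t=k^{k_0,0,c}_t+(k_0'-k_0)\,\mathcal{E}(Bt+\beta_1W_{1,t})\ge k^{k_0,0,c}_t>0$, so the positivity constraint on the capital in \eqref{constraints-kh} is preserved, the one on the habit holds because $c>0$ (as in the proof of Lemma \ref{set_controls}), and $c_t\le R\,k^{k_0,0,c}_t\le R\,k^{k_0',0,c}_t$ shows that \eqref{bdd_control} is still satisfied. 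Since the integrand in \eqref{def_J} does not involve the capital and the habit depends only on the control and on $h_0$, the functional $J$ is unchanged along such $c$, so
\[
V(k_0',h_0)=\sup_{c\in\mathcal{A}(k_0')}J(k_0',h_0;c)\ge\sup_{c\in\mathcal{A}(k_0)}J(k_0,h_0;c)=V(k_0,h_0).
\]
As an alternative for the $k$-direction, one could instead combine the homogeneity relation \eqref{eq_homo} with the monotonicity in $h$ just established and with $V\le0$: for $\alpha\ge1$ one has $V(\alpha k_0,h_0)\ge V(\alpha k_0,\alpha h_0)=\alpha^{(1-\gamma)(1-\sigma)}V(k_0,h_0)\ge V(k_0,h_0)$, because $(1-\gamma)(1-\sigma)<0$.

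I do not expect a genuine obstacle here: the argument is elementary once the closed-form solutions \eqref{expl_eq_k}--\eqref{expl_eq_h} are available. The only place to be slightly careful is to verify that the state constraints \eqref{constraints-kh}--\eqref{bdd_control} are preserved under both comparisons, which follows directly from the monotone dependence of those formulas on the initial data.
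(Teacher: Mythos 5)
Your argument is correct and follows essentially the same route as the paper: monotone inclusion of admissible sets (using that $J$ does not involve $k$) for the capital direction, and pathwise monotonicity of $h^{h_0,0,c}$ from \eqref{expl_eq_h} combined with $\mathcal{A}(k_0,h_1)=\mathcal{A}(k_0,h_2)$ for the habit direction. The alternative you sketch for the $k$-direction, deducing it from the homogeneity \eqref{eq_homo}, the $h$-monotonicity, and $V\le 0$, is a nice additional observation not in the paper.
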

	\begin{proof}
		We start by studying the monotonicity of $V$ with respect to $k$. Fix $h_0\in (0,\infty) $ and $k_1,\,k_2\in (0,\infty)$ such that $k_1<k_2$. By the explicit expression of the capital process in \eqref{expl_eq_k},  $ \mathcal{A}(k_1,h_0)\subset \mathcal{A}(k_2,h_0)$. Since the utility function $J$ in \eqref{def_J} does not depend on the capital $k$, we deduce that $V(k_1,h_0)\le V(k_2,h_0)$.\\
        As for the monotonicity in $h$, fix $k_0\in (0,\infty)$ and $h_1,\,h_2\in (0,\infty)$ such that $h_1<h_2$. By \eqref{indep_h0} in Lemma \ref{set_controls}, $\mathcal{A}(k_0,h_1)=\mathcal{A}(k_0,h_2)=\mathcal{A}(k_0)$. Moreover, from the explicit expression in \eqref{expl_eq_h}, we have $h^{h_1,0,c}\le h^{h_2,0,c}$, $\mathbb{P}\otimes \de t-$a.e., for every $c\in\mathcal{A}(k_0)$. As a result,   
         $$ J(k_0,h_1;c)\ge J(k_0,h_2;c),\quad c\in \mathcal{A}(k_0) .$$
         Taking the supremum over $\mathcal{A}(k_0)$ yields $V(k_0,h_1)\ge V(k_0,h_2)$, which completes the proof.
	\end{proof}
Combining the homogeneity in Lemma \ref{lemma_hom} and the monotonicity in Proposition \ref{prop:Vmonotone},  under an additional requirement on the parameters $\gamma$ and $\sigma$ we now demonstrate the continuity of $V$ in $\mathbb{R}^2_{++}$.
    \begin{theorem}	\label{prop_cont}
Suppose that 
\begin{equation}\label{hyp_Lip_h}
    \gamma(\sigma-1)\le 1.
\end{equation}
Then the  value function $V\colon \mathbb{R}^2_{++}\to(-\infty,0]$ is continuous.
	\end{theorem}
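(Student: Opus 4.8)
The plan is to use the homogeneity identity of Lemma~\ref{lemma_hom} to reduce the claim to continuity of the one-variable function $\psi(x):=V(1,x)$ on $(0,\infty)$, and then to prove the latter through a multiplicative perturbation estimate in the habit variable, closed with the monotonicity of $V$ (Proposition~\ref{prop:Vmonotone}) and its finiteness (Proposition~\ref{prop:lbound}). For the reduction, apply Lemma~\ref{lemma_hom} with $\alpha=k$ and base point $(1,h/k)$ to get $V(k,h)=k^{(1-\gamma)(1-\sigma)}\psi(h/k)$ on $\mathbb{R}^2_{++}$; since $k\mapsto k^{(1-\gamma)(1-\sigma)}$ is continuous and strictly positive on $(0,\infty)$ and $(k,h)\mapsto h/k$ is continuous on $\mathbb{R}^2_{++}$, it suffices to show that $V(k_0,\cdot)$ is continuous on $(0,\infty)$ for every fixed $k_0>0$ and then take $k_0=1$. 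The hypothesis \eqref{hyp_Lip_h} will be used precisely to guarantee that $z\mapsto z^{\gamma(\sigma-1)}$ is subadditive on $[0,\infty)$.

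Fix $k_0>0$. If $\gamma=0$, the utility \eqref{def_J}, and hence $V(k_0,\cdot)$, does not depend on $h$, and there is nothing to prove; so assume $\gamma>0$ and set $p:=\gamma(\sigma-1)\in(0,1]$. By Lemma~\ref{set_controls}, $\mathcal{A}(k_0,h)=\mathcal{A}(k_0)$ is independent of $h$, and, writing $A_t:=\mathcal{E}(-\rho t+\beta_2W_{2,t})>0$, formula \eqref{expl_eq_h} reads $h^{h_0,0,c}_t=A_th_0+A_t\rho\int_0^tA_s^{-1}c_s\,\de s$, which is affine and nondecreasing in $h_0$ and satisfies $h^{h_0',0,c}_t\ge A_th_0'$. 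Hence, for $0<h_0'\le h_0$ and $c\in\mathcal{A}(k_0)$, subadditivity of $z\mapsto z^p$ gives
\[
0\le (h^{h_0,0,c}_t)^p-(h^{h_0',0,c}_t)^p\le A_t^p(h_0-h_0')^p\le\Big(\tfrac{h_0-h_0'}{h_0'}\Big)^p(h^{h_0',0,c}_t)^p ,
\]
and plugging this into \eqref{def_J} yields
\[
0\le J(k_0,h_0';c)-J(k_0,h_0;c)\le\Big(\tfrac{h_0-h_0'}{h_0'}\Big)^p\,|J(k_0,h_0';c)| ,
\]
so that $J(k_0,h_0;c)\ge\bigl(1+(\tfrac{h_0-h_0'}{h_0'})^p\bigr)J(k_0,h_0';c)$. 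Evaluating this on $\varepsilon$-optimal controls for $(k_0,h_0')$ (which exist because $V(k_0,h_0')$ is finite) and letting $\varepsilon\downarrow0$ gives $V(k_0,h_0)\ge\bigl(1+(\tfrac{h_0-h_0'}{h_0'})^p\bigr)V(k_0,h_0')$ for $0<h_0'\le h_0$; the symmetric computation, where one instead estimates $A_t^p\le(h^{h_0,0,c}_t)^ph_0^{-p}$, gives $V(k_0,h_0')\ge\bigl(1+(\tfrac{h_0'-h_0}{h_0})^p\bigr)V(k_0,h_0)$ for $h_0'\ge h_0$.

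To finish, fix $h_0>0$ and recall that $V(k_0,\cdot)$ is nonincreasing and $\le 0$. For $h_0'\uparrow h_0$, monotonicity and the first estimate sandwich $V(k_0,h_0')$ between $V(k_0,h_0)$ and $V(k_0,h_0)\bigl(1+(\tfrac{h_0-h_0'}{h_0'})^p\bigr)^{-1}$, both of which tend to $V(k_0,h_0)$; for $h_0'\downarrow h_0$, monotonicity and the symmetric estimate sandwich $V(k_0,h_0')$ between $\bigl(1+(\tfrac{h_0'-h_0}{h_0})^p\bigr)V(k_0,h_0)$ and $V(k_0,h_0)$. Thus $V(k_0,\cdot)$ is continuous, and the reduction above concludes. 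I expect the only real difficulty to lie in the estimate of the previous paragraph: the factor $A_t^p$ (equivalently, the marginal-utility weight $c_t^{-(\sigma-1)}$) cannot be bounded uniformly over admissible controls, and the way around this is the self-referential bound $A_t^p\le(h^{h_0',0,c}_t/h_0')^p$, which reabsorbs it into $|J(k_0,h_0';c)|$ and relies crucially on $p=\gamma(\sigma-1)\le1$; the homogeneity reduction and the monotone squeeze are then routine.
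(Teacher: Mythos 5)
Your proof is correct, and it takes a genuinely different route from the paper's for the key step. Both arguments use the homogeneity reduction (Lemma~\ref{lemma_hom}) to pass to continuity of $V(k_0,\cdot)$ for fixed $k_0$, and both rely on $\mathcal{A}(k_0,h)$ being independent of $h$ (Lemma~\ref{set_controls}) so that the same control can be plugged into $J$ at two different habit levels. From there the paths diverge. The paper treats the two one-sided limits by separate arguments: right-continuity in $h$ via $\varepsilon$-optimal controls and Fatou's lemma, and left-continuity via a contradiction argument that rescales the control to $\tfrac{h_n}{h_0}c_n$ and then estimates $J(k_0,h_n;\tfrac{h_n}{h_0}c_n)-J(k_0,h_n;c_n)$ using the $\gamma(\sigma-1)$-H\"older continuity of $z\mapsto z^{\gamma(\sigma-1)}$. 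You instead prove a single two-sided \emph{quantitative} estimate
\[
J(k_0,h_0;c)\ge\Big(1+\big(\tfrac{h_0-h_0'}{h_0'}\big)^{\gamma(\sigma-1)}\Big)J(k_0,h_0';c),\qquad 0<h_0'\le h_0,
\]
obtained directly from subadditivity of $z\mapsto z^{\gamma(\sigma-1)}$ together with the pointwise self-bound $\mathcal{E}(-\rho t+\beta_2W_{2,t})\le h^{h_0',0,c}_t/h_0'$, which reabsorbs the otherwise uncontrollable weight into $|J(k_0,h_0';c)|$ itself. Passing to $\varepsilon$-optimal controls yields the same inequality at the level of $V$, and the monotone squeeze from Proposition~\ref{prop:Vmonotone} and finiteness from Proposition~\ref{prop:lbound} close the argument. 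Both proofs use the hypothesis $\gamma(\sigma-1)\le1$ for essentially the same reason (good regularity of $z\mapsto z^{\gamma(\sigma-1)}$), but yours avoids the rescaled control, the Fatou step, and the contradiction scaffold entirely, is symmetric in the two one-sided limits, and in fact delivers an explicit modulus of continuity in $h$ (hence, via the homogeneity identity, local H\"older continuity of $V$ on $\mathbb{R}^2_{++}$) rather than mere continuity.
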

    \begin{proof}
    Fix an arbitrary point $P_0= (k_0,h_0)\in\mathbb{R}_{++}^2$. 
    Note  that, by Lemma \ref{lemma_hom}, the separate continuity of $V(k_0,\cdot)$ at $h_0$, i.e., 
    \begin{equation}\label{cont_h}
        \lim_{h\to h_0}V(k_0,h)=V(k_0,h_0),
    \end{equation}
    is sufficient to deduce the joint continuity of $V$ at $P_0$. Indeed, given two sequences $(h_n)_n,(k_n)_n\subset \mathbb{R}_+$ which converge to $h_0$ and $k_0$, respectively, by \eqref{eq_homo},
\[
 V(k_n,h_n)= V\left(\frac{k_n}{k_0}k_0,\frac{k_n}{k_0} \frac{k_0}{k_n}h_n\right)=\left(\frac{k_n}{k_0}\right)^{(1-\gamma)(1-\sigma)}V\left(k_0, \frac{k_0}{k_n}h_n\right)\underset{n\to\infty}{\longrightarrow}V(k_0,h_0),
\]
where we use \eqref{cont_h} to pass to the limit. \vspace{2mm}

We then focus on \eqref{cont_h}. Since the value function $V$ is nonincreasing with respect to  $h$ by Proposition \ref{prop:Vmonotone}, 
\begin{equation*}
    \lim_{h\to h_0^-}V(k_0,h)\geq V(k_0,h_0) \geq \lim_{h\to h_0^+}V(k_0,h).
\end{equation*}
Thus, we only need to show the reversed chain of inequalities, starting from \begin{equation}\label{simple_h1}\lim_{h\to h_0^+}V(k_0,h)\geq V(k_0,h_0).
\end{equation}
Consider $(h_n)_n\subset (h_0,\infty)$ such that $\lim_{n\to\infty}h_n=h_0$. For every $\epsilon>0$, by \eqref{def:valuefunction}, there exists an $\epsilon-$optimal control $\bar{c}\in \mathcal{A}(k_0,h_0)$ satisfying
    \begin{equation*}
        V(k_0,h_0)\leq \mathbb{E} \bigg[\int_{0}^{\infty}\frac{1}{1-\sigma}e^{-\theta t}\frac{1}{\bar{c}_{t}^{\sigma-1}}\big({h}_{t}^{h_0,0,\bar{c}}\big)^{\gamma(\sigma-1)}\de t\bigg] + \epsilon.
    \end{equation*}
    Since $\bar{c}\in\mathcal{A}(k_0,h_n)$ by Lemma \ref{set_controls}, 
    \begin{align*}
        V(k_0,h_n)-V(k_0,h_0)\geq \frac{1}{1-\sigma}\mathbb{E}\bigg[ \int_{0}^{\infty}\frac{e^{-\theta t}}{\bar{c}_{t}^{\sigma-1}}\bigg(\Big({h}^{h_n,0,\bar{c}}_{t}\Big)^{\gamma(\sigma-1)}-\Big({h}^{h_0,0,\bar{c}}_{t}\Big)^{\gamma(\sigma-1)}\bigg)\de t\bigg] -\epsilon,
    \end{align*}
    which holds for every $n\in \mathbb{N}$. The explicit expression in \eqref{expl_eq_h} implies that ${h}^{h_n,0,\bar{c}}$ converges  to ${h}^{h_0,0,\bar{c}}$ locally uniformly in $[0,\infty)$, $\mathbb{P}-$a.s. Therefore, also observing that ${h}^{h_n,0,\bar{c}}\ge {h}^{h_0,0,\bar{c}}$, $n\in\mathbb{N}$,  $\mathbb{P}\otimes \de t-$a.e., Fatou's lemma yields 
    \[
      \lim_{n\to\infty}  V(k_0,h_n)\geq V(k_0,h_0)-\epsilon.
    \]
    This estimate gives \eqref{simple_h1} as $\epsilon$ is arbitrary.
     \vspace{2mm}\\
    Concerning $\lim_{h\to h_0^-}V(k_0,h)$, we suppose by contradiction that 
    \begin{equation}\label{assurdo}
    \lim_{h\to h_0^-}V(k_0,h)>V(k_0,h_0).
    \end{equation}
     By \eqref{def:valuefunction}, we consider a sequence $(h_n)_n\subset (0,h_0)$ which converges to $h_0$ and corresponding  admissible controls $c_n\in\mathcal{A}(k_0,h_n)$  such that 
     \begin{equation}\label{bound_contradiction}
         J(k_0,h_n;c_n)>V(k_0,h_n)-\frac{1}{n},\, n\in\mathbb{N}, 
         \text{ whence, by (\ref{assurdo}), }\lim_{n\to\infty}J(k_0,h_n;c_n)>V(k_0,h_0).
     \end{equation}
   Note that  \eqref{bound_contradiction} combined with \eqref{expl_eq_h} and \eqref{def_J} entails 
     \begin{equation}\label{unif_bound}
        \sup_{n\in\mathbb{N}}\mathbb{E}\bigg[\int_0^\infty \frac{e^{-\theta t}}{c_{n,t}^{\sigma-1}}
        \mathcal{E}(-\rho t+\beta_2 W_{2,t})^{\gamma(1-\sigma)}\bigg(1+\int_0^t\frac{c_{n,s}}{\mathcal{E}(-\rho s+\beta_2W_{2,s})}\,\de s\bigg)^{\gamma(\sigma-1)}\de t\bigg]<\infty.
     \end{equation}
     Fix a generic $n\in\mathbb{N}$. Since $h_n<h_0$, the modified control $\frac{h_n}{h_0}c_n$ belongs to  $\mathcal{A}(k_0,h_n).$ We then consider $J\Big(k_0,h_n;\frac{h_n}{h_0}c_n\Big)$
     and aim to estimate the difference 
     $$\Delta_n\coloneqq J\Big(k_0,h_n;\frac{h_n}{h_0}c_n\Big)-J(k_0,h_n;c_n).
     $$ 
     In particular, by  \eqref{def_J}  we compute 
     \begin{align}\notag
|\Delta_n|&\le \frac{1}{\sigma-1}\bigg(\left(\frac{h_0}{h_n}\right)^{\sigma-1}-1\bigg)
\mathbb{E}\bigg[
\int_0^\infty \frac{e^{-\theta t}}{c_{n,t}^{\sigma-1}} \Big(
h_{t}^{h_n,0,\frac{h_n}{h_0}c_n}
\Big)^{\gamma(\sigma-1)}\de t 
\bigg]\\\label{est_delta}
&\quad +\frac{1}{\sigma-1} 
\mathbb{E}\bigg[
\int_0^{\infty} \frac{e^{-\theta t}}{c_{n,t}^{\sigma-1}} \Big|\Big(
h_{t}^{h_n,0,\frac{h_n}{h_0}c_n}
\Big)^{\gamma(\sigma-1)}-
\Big(
h_{t}^{h_n,0,c_n}
\Big)^{\gamma(\sigma-1)}\Big|
\de t 
\bigg]
\eqqcolon\mathbf{\upperRomannumeral{1}}_n+\mathbf{\upperRomannumeral{2}}_n.
\end{align}
By  \eqref{expl_eq_h},
\begin{equation}\label{rel_h0}
h_{t}^{h_n,0,\frac{h_n}{h_0}c_n}
=
\frac{h_n}{h_0}h_{t}^{h_0,0,c_n}
,\quad t\ge 0,
\end{equation}
whence, considering also that the sequence $(J(k_0,h_0;c_n))_n$ is bounded by \eqref{unif_bound},
\[
\mathbf{\upperRomannumeral{1}}_n
\le  
\bigg(\left(\frac{h_0}{h_n}\right)^{\sigma-1}-1\bigg)\bigg(\frac{h_n}{h_0}\bigg)^{\gamma(\sigma-1)} \sup_{n\in\mathbb{N}}|J(k_0,h_0;c_n)|<
\frac{1}{2n},\quad 
\text{for $n$ sufficiently large.}
\]
As for $\mathbf{\upperRomannumeral{2}}_n,$ notice that the power function $x^{\gamma(\sigma-1)}$ is $\gamma(\sigma-1)-$H\"older continuous in $[0,\infty)$ thanks to the assumption \eqref{hyp_Lip_h}. Denoting by $C_1$ its H\"older-continuity constant,
by \eqref{expl_eq_h} and \eqref{unif_bound} we deduce that 
\begin{align*}
   \mathbf{\upperRomannumeral{2}}_n&\le 
 \frac{C_1}{\sigma-1}\rho^{\gamma(\sigma-1)} \left(1-\frac{h_n}{h_0}\right)^{\gamma(\sigma-1)} \\&\hspace{2cm}\times
\mathbb{E}\bigg[
\int_0^{\infty} \frac{e^{-\theta t}}{c_{n,t}^{\sigma-1}} 
\mathcal{E}(-\rho t+\beta_2 W_{2,t})^{\gamma(\sigma-1)} 
\left(\int_0^{t} \frac{c_{n,s}}
{\mathcal{E}(-\rho s+\beta_2W_{2,s})}\,\de s\right)^{\gamma(\sigma-1)} 
\de t 
\bigg]
\\&
\le C_1\rho^{\gamma(\sigma-1)} \left(1-\frac{h_n}{h_0}\right)^{\gamma(\sigma-1)} \sup_{n\in\mathbb{N}}|J(k_0,h_0;c_n)|
\le \frac{1}{2n},\quad \text{for $n$ sufficiently large}.
\end{align*}
Plugging the estimates we have just established on $\mathbf{\upperRomannumeral{1}}_n$ and $\mathbf{\upperRomannumeral{2}}_n$ into \eqref{est_delta} gives $|\Delta_n|<\frac{1}{n}$, for $n$ sufficiently large. Thus, by \eqref{bound_contradiction},
\[
\lim_{n\to\infty} J\Big(k_0,h_n;\frac{h_n}{h_0}c_n\Big)=
\lim_{n\to\infty}J(k_0,h_n;c_n)+\lim_{n\to\infty}\Delta_n=\lim_{n\to\infty}J(k_0,h_n;c_n)>V(k_0,h_0).
\]
At the same time, recalling \eqref{def_J} and \eqref{rel_h0}, 
\[
J\Big(k_0,h_n;\frac{h_n}{h_0}c_n\Big)
=\left(\frac{h_0}{h_n}\right)^{(1-\gamma)(\sigma-1)}
J(k_0,h_0;c_n)\le \left(\frac{h_0}{h_n}\right)^{(1-\gamma)(\sigma-1)}V(k_0,h_0),
\]
whence $\lim_{n\to\infty}J\Big(k_0,h_n;\frac{h_n}{h_0}c_n\Big)\le V(k_0,h_0)$.
This is a contradiction demonstrating that \eqref{assurdo} cannot hold. Consequently, $V(k_0,\cdot)$ is also left-continuous at $h_0$ and the proof is complete. 
    \end{proof}
Proposition \ref{prop:lbound} and Theorem \ref{prop_cont} enable us to establish the dynamic programming principle, as  shown in the next result. Its proof,  which follows standard arguments (see, e.g., \cite{ fleming2006controlled, gassiat2014investment, ishii2002class}) is omitted for brevity.
 \begin{proposition}\label{DPP} Suppose that \eqref{hyp_Lip_h} holds. Then the value function $V$ satisfies the dynamic programming equation, that is, for every $(k,h)\in\mathbb{R}_{++}^2 $ and stopping time $\tau$ (possibly depending on $c \in \mathcal{A}(k,h)$),  the following functional equation  holds:
    \begin{equation}
    V(k,h)=\sup _{c \in \mathcal{A}(k,h)} \mathbb{E}\bigg[\int_{0}^{\tau}\frac{e^{-\theta t}}{1-\sigma}\left(\frac{h_t^{\gamma}}{c_t}\right)^{\sigma-1}\de t + e^{-\theta \tau}V(k_\tau,h_\tau) \bigg].
    \end{equation}
    \end{proposition}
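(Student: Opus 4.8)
The plan is to follow the classical route for establishing the dynamic programming principle (DPP) for a controlled-diffusion problem with a state constraint, adapting it to our infinite-horizon discounted setting. Throughout, fix $(k,h)\in\mathbb{R}^2_{++}$ and a stopping time $\tau$ (possibly depending on $c$). Write $(k^{k,0,c},h^{h,0,c})$ for the state trajectories from Section \ref{sec_model}, and recall that, by Theorem \ref{prop_cont}, $V$ is finite and continuous on $\mathbb{R}^2_{++}$, so that the right-hand side of the dynamic programming equation is well-defined (the running-cost integral over $[0,\tau]$ converges since it is dominated by the convergent integral over $[0,\infty)$, and $V(k_\tau,h_\tau)$ makes sense by continuity and the state constraints \eqref{constraints-kh}).

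The first inequality ($\le$) is the ``sub-optimality'' direction. Given $c\in\mathcal{A}(k,h)$, I would use the Markov/flow property of the SDEs \eqref{eq:k}-\eqref{eq:h}: conditionally on $\mathcal{F}_\tau$, the shifted control $c_{\tau+\cdot}$ is admissible for the initial data $(k_\tau,h_\tau)$ (here one uses that $\mathcal{A}$ depends only on $k_0$ via \eqref{indep_h0}, that the constraint \eqref{bdd_control} is preserved under time shifts, and that $h_\tau,k_\tau>0$ a.s.), and a regular-conditional-probability argument gives
\begin{equation*}
\mathbb{E}\bigg[\int_\tau^\infty \frac{e^{-\theta t}}{1-\sigma}\Big(\frac{h_t^\gamma}{c_t}\Big)^{\sigma-1}\de t\;\Big|\;\mathcal{F}_\tau\bigg]\le e^{-\theta\tau}V(k_\tau,h_\tau),\quad \mathbb{P}\text{-a.s.}
\end{equation*}
Splitting $J(k,h;c)=\mathbb{E}\big[\int_0^\tau\cdot\big]+\mathbb{E}\big[\int_\tau^\infty\cdot\big]$, taking conditional expectations inside the second term, and then the supremum over $c$ yields $V(k,h)\le \sup_{c}\mathbb{E}\big[\int_0^\tau \tfrac{e^{-\theta t}}{1-\sigma}(\tfrac{h_t^\gamma}{c_t})^{\sigma-1}\de t + e^{-\theta\tau}V(k_\tau,h_\tau)\big]$.

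For the reverse inequality ($\ge$) I would use an $\epsilon$-optimal concatenation. Fix $c\in\mathcal{A}(k,h)$ and $\epsilon>0$; by continuity of $V$ and a measurable-selection argument one can choose, for each $(k',h')$, an $\epsilon$-optimal control and paste it: define $\tilde c$ to equal $c$ on $[0,\tau]$ and, on $\{t>\tau\}$, to equal (an $\mathcal{F}_\tau$-measurably selected, $\epsilon$-optimal for the realized endpoint $(k_\tau,h_\tau)$) shifted control. One checks $\tilde c\in\mathcal{A}(k,h)$ — again relying on \eqref{indep_h0}, on the shift-invariance of \eqref{bdd_control}, and on the fact that positivity of the new $h$-trajectory is automatic once the control is positive (as in Lemma \ref{set_controls}) — and that $J(k,h;\tilde c)\ge \mathbb{E}\big[\int_0^\tau \tfrac{e^{-\theta t}}{1-\sigma}(\tfrac{h_t^\gamma}{c_t})^{\sigma-1}\de t\big]+\mathbb{E}\big[e^{-\theta\tau}(V(k_\tau,h_\tau)-\epsilon)\big]$. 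Letting $\epsilon\downarrow 0$ and taking the supremum over $c$ gives the claim. The main technical obstacle — and the reason the proof ``follows standard arguments'' can legitimately be invoked — is the measurable-selection step: one must show that the $\epsilon$-optimal controls can be chosen in an $\mathcal{F}_\tau$-measurable way and pasted into a process that is still progressively measurable and admissible; the state-constraint structure here is benign (positivity of $k$ is inherited from $c\le Rk$ on $[0,\tau]$ and the $\epsilon$-optimal tail staying in $\mathcal{A}(k_\tau)$, and positivity of $h$ is free), so the usual selection theorems (as in \cite{fleming2006controlled, gassiat2014investment, ishii2002class}) apply essentially verbatim, which is why the full details are omitted.
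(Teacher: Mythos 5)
The paper itself does not prove Proposition \ref{DPP}: it explicitly omits the argument, citing the standard references \cite{fleming2006controlled, gassiat2014investment, ishii2002class}, so there is no in-paper proof to compare against. Your sketch reproduces exactly the standard two-inequality route those references use (conditioning on $\mathcal{F}_\tau$ plus the flow/Markov property for the ``$\le$'' direction, $\epsilon$-optimal concatenation with measurable selection for the ``$\ge$'' direction) and correctly identifies the model-specific points that make the pasting admissible here, namely the $h_0$-independence \eqref{indep_h0}, the automatic positivity of $h$ under positive controls, and the preservation of the constraint $c\le Rk$ under time shifts; this is consistent with, and fills in sensibly, what the paper leaves to the literature.
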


    \section{Solving the HJB equation} \label{sec_new}
In this section, we show that the value function $V$ solves the HJB equation \eqref{eq:HJB-s} both in the viscosity and the classical sense. In particular, we demonstrate that  $V$ is a regular map in the class $C^2(\mathbb{R}^2_{++};\mathbb{R}).$
    \subsection{$V$ is a viscosity solution of the HJB equation}
 \label{sec_newvisc}
We begin this subsection by recalling the definition of (continuous) \emph{viscosity solutions} of the HJB equation \eqref{eq:HJB-s}.
    
    \begin{definition}\label{def_viscosity}
Let $D$ be an open set in $\mathbb{R}_{++}^2$. A continuous function $v:D \rightarrow \mathbb{R}$ is a \emph{viscosity subsolution} (resp., \emph{viscosity supersolution}) of \eqref{eq:HJB-s} in  $D$ if, for any $\psi \in C^2(D;\mathbb{R})$ and any local maximum point $(k_M,h_M) \in D$ (resp., local minimum point $(k_m,h_m) \in D$) of $v-\psi$, the following inequality holds:
\begin{align*}	
\theta v(k_M,h_M)- H_{\text{max}}(k_M,h_M,\nabla \psi(k_M,h_M), D^2\psi(k_M,h_M)) &\leq 0\\
(\text{resp., } \theta v(k_m,h_m)- 
H_{\text{max}}(k_m,h_m,\nabla \psi(k_m,h_m), D^2\psi(k_m,h_m)) 
&\geq 0).
\end{align*}
A continuous function $v:D\rightarrow \mathbb{R}$ is a \emph{viscosity solution} of \eqref{eq:HJB-s} in  $D$ if it is both a viscosity subsolution and a viscosity supersolution in $D$.
    \end{definition}
In the next theorem, adapting in a nontrivial way some standard arguments in, e.g., \cite{choulli2003diffusion, di2011pension}, we prove that the value function $V$ defined in \eqref{def:valuefunction} solves \eqref{eq:HJB-s} on $\mathbb{R}^2_{++}$ in the viscosity sense.
Recall that, given a point $P\in\mathbb{R}^2_{++}$ and positive constant $r>0$, we denote by $B_r(P)$ the open ball in $\mathbb{R}^2$ centered at $P$ of radius $r$, and by $\overline{B}_r(P)$ its closure.

    \begin{theorem}\label{prop:VisVisc}
Assume \eqref{hyp_Lip_h}. Then the value function $V$ is a viscosity solution of the HJB equation \eqref{eq:HJB-s} in $\mathbb{R}^2_{++}$.
    \end{theorem}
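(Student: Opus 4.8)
The plan is to verify the viscosity subsolution and supersolution properties separately, in both cases using the dynamic programming principle (Proposition \ref{DPP}), which is available since \eqref{hyp_Lip_h} is assumed. Continuity of $V$ on $\mathbb{R}^2_{++}$ is already granted by Theorem \ref{prop_cont}, so the only thing at stake is the pair of differential inequalities at contact points of test functions.

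**Supersolution property.** Fix $\psi\in C^2$ and a local minimum point $P_0=(k_0,h_0)$ of $V-\psi$; without loss of generality $V(P_0)=\psi(P_0)$, so $V\ge\psi$ near $P_0$. Fix an arbitrary constant control $c\equiv\bar c\in(0,Rk_0]$ — more precisely, since we need admissibility for small times, take the control $\bar c\,\mathcal{E}(\beta_1W_{1,\cdot})/\mathcal{E}(\beta_1 W_{1,0})$-type perturbation, or simply argue that for the constant value $\bar c<Rk_0$ the state $(k,h)$ stays in a small ball around $P_0$ up to a stopping time $\tau_\delta$ (the exit time from $B_\delta(P_0)$, intersected with a fixed small deterministic time), and on that event the constraints \eqref{constraints-kh}–\eqref{bdd_control} hold. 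Apply the DPP with this control and the stopping time $\tau_\delta$, use $V\ge\psi$ at $P_{\tau_\delta}$ and $V(P_0)=\psi(P_0)$, apply It\^o's formula to $e^{-\theta t}\psi(k_t,h_t)$, divide by $\mathbb{E}[\tau_\delta]$, and let $\delta\to0$. The local boundedness of $\psi$ and its derivatives on $\overline{B}_\delta(P_0)$, together with standard moment estimates for the linear SDEs \eqref{eq:k}–\eqref{eq:h}, justify passing to the limit and yield
\[
\theta\psi(P_0)-\mathcal{L}_{k_0,h_0}\psi - g(\bar c;h_0,\partial_k\psi(P_0),\partial_h\psi(P_0))\ge 0.
\]
Taking the supremum over $\bar c\in(0,Rk_0]$ and recalling \eqref{G_expression}–\eqref{eq:HJB-s} gives $\theta V(P_0)-H_{\mathrm{max}}(P_0,\nabla\psi(P_0),D^2\psi(P_0))\ge0$.

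**Subsolution property.** Fix $\psi\in C^2$ and a local maximum point $P_0$ of $V-\psi$ with $V(P_0)=\psi(P_0)$, so $V\le\psi$ near $P_0$. Argue by contradiction: suppose $\theta\psi(P_0)-H_{\mathrm{max}}(P_0,\nabla\psi(P_0),D^2\psi(P_0))=2\eta>0$. By continuity of $\psi$ and its derivatives and of the map $G$ from \eqref{G_expression} (which is continuous in all arguments on $\mathbb{R}^2_{++}\times\mathbb{R}^2$), there is $\delta>0$ such that $\theta\psi(P)-\mathcal{L}_{k,h}\psi(P)-g(c;h,\partial_k\psi(P),\partial_h\psi(P))\ge\eta$ for all $P\in B_\delta(P_0)$ and all admissible $c$ (the inequality for all $c$ follows since $G$ is the supremum over $c$). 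Now take any $c\in\mathcal{A}(k_0,h_0)$, let $\tau_\delta$ be the exit time of the state from $B_\delta(P_0)$ capped at a fixed small time, apply It\^o to $e^{-\theta t}\psi(k_t,h_t)$ up to $\tau_\delta$, use $V\le\psi$ near $P_0$ and the pointwise inequality above to obtain
\[
\mathbb{E}\Big[\int_0^{\tau_\delta}\frac{e^{-\theta t}}{1-\sigma}\Big(\tfrac{h_t^\gamma}{c_t}\Big)^{\sigma-1}\de t + e^{-\theta\tau_\delta}V(k_{\tau_\delta},h_{\tau_\delta})\Big]\le V(P_0)-\eta\,\mathbb{E}[\tau_\delta],
\]
and then taking the supremum over $c$ contradicts the DPP, provided one checks $\inf_c\mathbb{E}[\tau_\delta]>0$ uniformly.

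**Main obstacle.** The delicate point — and where the ``nontrivial adaptation'' mentioned before the theorem is needed — is the singularity of the running cost as $c\to0^+$ and the state constraint $c\le Rk$: I must ensure that the stopping-time/localization argument is compatible with admissible controls that may behave wildly, and in particular that $\mathbb{E}[\tau_\delta]$ is bounded below uniformly over $c\in\mathcal{A}(k_0,h_0)$. This requires a careful choice of $\tau_\delta$ (e.g. exit time of $k$ from a bracket $[\tfrac{k_0}{2},2k_0]$ together with exit of $h$ from a bracket around $h_0$, capped at a deterministic time $t_0$) and the observation, via the explicit formulas \eqref{expl_eq_k}–\eqref{expl_eq_h} and the bound $c\le Rk$, that the drift of $k$ is controlled (between $(B-R)k$ and $Bk$) so that $k$ cannot leave its bracket too quickly regardless of $c$; the habit $h$ is likewise controlled because $c\le Rk\le 2Rk_0$ on that event. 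The singular term only helps the subsolution inequality (it is $\le0$) and needs no upper control there; in the supersolution step we use only the bounded constant control, avoiding the singularity altogether. Assembling these localization estimates rigorously is the bulk of the work; the It\^o computations and the passage to the limit are then routine.
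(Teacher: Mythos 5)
Your overall strategy is the same as the paper's: prove the two viscosity inequalities separately, using the DPP together with It\^o/Dynkin and a localization by exit times, with a contradiction argument for the subsolution. You also correctly pinpoint that the crux of the subsolution step is a lower bound on $\mathbb{E}[\tau_\delta]$ that is uniform over admissible controls. However, there are two places where your proposal departs from the paper's proof in ways worth highlighting.

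First, in the supersolution step your control is not globally admissible as written. Fixing $c\equiv\bar c\in(0,Rk_m]$ and arguing the constraints hold ``on the event'' that $(k,h)$ stays in a small ball does not suffice: $\mathcal{A}(k_m,h_m)$ requires \eqref{constraints-kh}--\eqref{bdd_control} to hold for \emph{all} $t\ge0$, and with multiplicative noise the capital under a constant control eventually dips below $\bar c/R$ with positive probability. The DPP only sees globally admissible controls. The paper fixes this explicitly by gluing: use $c\equiv\bar c$ up to $\bar\tau:=\inf\{t:k_t\le\bar c/R\}$, then switch to a small proportional control $\nu\,\mathcal{E}(\beta_1W_1)$-type after $\bar\tau$ (see \eqref{adm_control_cast}), yielding a genuine element of $\mathcal{A}(k_m,h_m)$ that coincides with $\bar c$ on $[0,\bar\tau_{\delta,\epsilon}]$. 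Your first suggestion (an $\mathcal{E}(\beta_1W_1)$-type perturbation) is in the right direction, but the gluing is essential and should be spelled out.

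Second, for the uniform lower bound $\inf_{c\in\mathcal{A}}\mathbb{E}[\tau_\epsilon\wedge 1]>0$, you propose direct bracket-exit estimates using the explicit solution formulas and the drift bound $(B-R)k\le Bk-c\le Bk$. This is plausible but is precisely the ``bulk of the work'' you leave unwritten; it would require quantitative exit-time estimates for a pair of SDEs whose drifts are only bounded, not known. The paper instead uses a cleaner device (borrowed from Choulli--Taksar--Zhou): introduce the auxiliary quadratic
\[
w(k,h)=\tfrac{1}{C}\bigl((k-k_M)^2+(h-h_M)^2-\epsilon^2\bigr),
\]
with $C$ the supremum over $(k,h)\in\overline{B}_\epsilon$ and $c\in(0,Rk]$ of the Dynkin generator terms. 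Applying Dynkin's formula to $e^{-\theta t}w$ gives the one-line inequality $\mathbb{E}\bigl[\int_0^{\tau_\epsilon\wedge1}e^{-\theta t}\,\de t\bigr]\ge\frac{\epsilon^2}{C}(1-e^{-\theta})$, uniformly over admissible $c$, because $w=0$ on $\partial B_\epsilon$ and $w\ge-\epsilon^2/C$. This sidesteps the bracket estimates entirely; you may wish to adopt it rather than pursue the direct route. Apart from these two points, the rest of your argument (dividing by a deterministic $\epsilon$ rather than by $\mathbb{E}[\tau_\delta]$ is slightly cleaner, but both work) matches the paper.
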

    \begin{proof}
We  structure the proof in two steps. More specifically, in the first (resp., second) step, we demonstrate that $V$ is a viscosity supersolution (resp., viscosity subsolution) of \eqref{eq:HJB-s} on $\mathbb{R}^2_{++}$.\vspace{1mm}\\
    \underline{\emph{Step \upperRomannumeral{1}}}.  Consider $\psi \in C^2(\mathbb{R}_{++}^2; \mathbb{R} )$ and a local minimum point $(k_m,h_m) \in\mathbb{R}_{++}^2$ for $V-\psi$. Without loss of generality, we suppose that $V(k_m,h_m)=\psi(k_m,h_m)$, hence there exists $\delta>0$ such that $V(k,h)\geq \psi(k,h)$, $(k,h)\in B_\delta (k_m,h_m)\subset \mathbb{R}_{++}^2$. \\    
    Fix a constant $\bar{c}\in (0,Rk_m)$. Recalling the notation in \eqref{notation_dependence}, we define
   the stopping time
    \[
\bar{\tau}=\inf\left\{
t\ge 0 : k^{k_m,0,\bar{c}}_t\le \frac{\bar{c}}{R}
\right\};
\]
observe that $\bar{\tau}>0,\,\mathbb{P}-$a.s., by the continuity of $k^{k_m,0,\bar{c}}$.  We now employ $\bar{\tau}$ to construct the following positive progressively measurable process  $c_{\nu}^\ast,\,\nu>0$: 
 \begin{equation}\label{adm_control_cast}
      c^*_{\nu, t} = \begin{cases}
          \bar{c}, & t\le\bar{\tau},\\
           \nu 
           \big(\mathcal{E}(B\bar{\tau}+\beta_1W_{1,\bar{\tau}})\big)^{-1}
           \mathcal{E}(\beta_1W_{1,t}),& t > \bar{\tau}.
      \end{cases} 
    \end{equation}
Notice that, choosing $\nu$ sufficiently small, $c_\nu^*\in \mathcal{A}(k_m,h_m)$.  Indeed, by the definition of $\bar{\tau}$ and \eqref{expl_eq_k}, on the event $\{\tau<\infty\}$ 
\[
  \frac{\bar{c}}{R}
    \big(\mathcal{E}(B\bar{\tau}+\beta_1W_{1,\bar{\tau}})\big)^{-1}
    =
   \big(\mathcal{E}(B\bar{\tau}+\beta_1W_{1,\bar{\tau}})\big)^{-1} k^{k_m,0,c^*_{\nu}}_{\bar{\tau}}
   =
    k_m-\bar{c}\int_0^{\bar{\tau}}\frac{\de s}{\mathcal{E}(Bs+\beta_1W_{1,s})}.
\]
Consequently, for every $t>\bar{\tau}$, using \eqref{adm_control_cast} we have 
    \begin{align*}
        k^{k_m,0,c^*_{\nu}}_t
        &=
        \mathcal{E}(Bt+\beta_1 W_{1,t})\left(
k_m-\bar{c}\int_0^{\bar{\tau}}\frac{\de s}{\mathcal{E}(Bs+\beta_1W_{1,s})}\
-\int_{\bar{\tau}}^t\frac{c^\ast_{\nu, s}}{\mathcal{E}(Bs+\beta_1W_{1,s})}\de s\right)\\&=
 \big(\mathcal{E}(B\bar{\tau}+\beta_1W_{1,\bar{\tau}})\big)^{-1}\mathcal{E}(Bt+\beta_1 W_{1,t})\left(
\frac{\bar{c}}{R}-\frac{\nu}{B}(e^{-B\bar{\tau}}-e^{-Bt})\right),
    \end{align*}
 which shows that the constraints \eqref{constraints-kh} and \eqref{bdd_control} are satisfied as $\nu$ approaches $0$.
    \\
    We then fix $\nu^\ast>0$ such that $c^{\ast}\coloneqq c^{\ast}_{\nu^\ast}\in\mathcal{A}(k_m,h_m)$, and define the stopping times 
    $$\tau_\delta= \inf\left\{
t\ge 0 : |k^{k_m,0,{c}^{\ast}}_t-k_m|^2+|h^{h_m,0,{c}^{\ast}}_t-h_m|^2\ge \delta^2
\right\}
    ,
    \qquad \bar{\tau}_{\delta,\epsilon}=\tau_\delta\wedge \epsilon\wedge \bar{\tau}.$$ 
    Since trivially $\mathbb{E}[\bar{\tau}_{\delta,\epsilon}]\le \epsilon<\infty$, we can apply Dynking's formula  to the function $(t,k,h)\mapsto e^{-\theta t}\psi(k,h)$ to obtain, denoting by $h^*$ (resp., $k^*$) the process $h^{h_m,0,c^\ast}$(resp., $k^{k_m,0,c^\ast}$) to make the notation shorter, 
    \begin{align}\notag
       &\mathbb{E}\Big[e^{-\theta \bar{\tau}_{\delta,\epsilon}}\psi(k^*_{\bar{\tau}_{\delta,\epsilon}},h^*_{\bar{\tau}_{\delta,\epsilon}})\Big] - \psi(k_m,h_m)  =\mathbb{E}\bigg[\int_0^{\bar{\tau}_{\delta,\epsilon}}e^{-\theta t}\Big(-\theta \psi(k^*_t ,h^*_t ) + (B(k^*_t -c^*_t )\partial_k\psi(k^*_t ,h^*_t )\\&\label{inter_super} \qquad + \rho(c^*_t-h^*_t) )\partial_h \psi(k^*_t ,h^*_t ) + \frac{1}{2}\beta_1^2(k^*_t) ^2\partial_{kk}\psi(k^*_t ,h^*_t ) +\frac{1}{2}\beta_2^2(h^*_t)^2 \partial_{hh}\psi(k^*_t ,h^*_t )\Big)\de t  \bigg]. 
    \end{align}
   We observe that, since  $\bar{\tau}_{\delta,\epsilon}\le \tau_\delta$,
   \[
    e^{-\theta \bar{\tau}_{\delta,\epsilon}} V(k^*_{\bar{\tau}_{\delta,\epsilon}} ,h^*_{\bar{\tau}_{\delta,\epsilon}})-V(k_m,h_m)\geq e^{-\theta \bar{\tau}_{\delta,\epsilon}} \psi(k^*_{\bar{\tau}_{\delta,\epsilon}} ,h^*_{\bar{\tau}_{\delta,\epsilon}} ) - \psi(k_m,h_m),\quad \mathbb{P}-\text{a.s.}
   \]
   Hence, by the dynamic programming principle in Proposition \ref{DPP}, we deduce from \eqref{inter_super} that  
    \begin{align*}
        0 \geq& \mathbb{E}\bigg[\int_0^{\bar{\tau}_{\delta,\epsilon}}e^{-\theta t}\bigg(-\theta \psi(k^*_t ,h^*_t ) + 
        \frac{1}{1-\sigma}\bigg(\frac{(h^*_t)^\gamma}{\bar{c}}\bigg)^{\sigma-1}
        +(B(k^*_t -\bar{c} )\partial_k\psi(k^*_t ,h^*_t )\\& \qquad + \rho(\bar{c}-h^*_t) )\partial_h \psi(k^*_t ,h^*_t ) + \frac{1}{2}\beta_1^2(k^*_t) ^2\partial_{kk}\psi(k^*_t ,h^*_t ) +\frac{1}{2}\beta_2^2(h^*_t)^2 \partial_{hh}\psi(k^*_t ,h^*_t )\bigg)\de t  \bigg],
    \end{align*}
    where we also use the fact that, by \eqref{adm_control_cast}, ${c}^\ast=\bar{c}$ in $[0,\bar{\tau}_{\delta,\epsilon}]$, as $\bar{\tau}_{\delta,\epsilon}\le \bar{\tau}.$
    Notice that $\tau_\delta>0$, $\mathbb{P}-$a.s., by the continuity of $h^*$ and $k^*$. Recalling that  the same holds true for $\bar{\tau}$, too, for $\mathbb{P}-$a.e. $\omega\in\Omega$, \[
    \text{$\bar{\tau}_{\delta,\epsilon}(\omega)=\epsilon$, $\quad$ when  $\epsilon$ is sufficiently small.}\]
    Therefore, dividing by $\epsilon$  the previous estimate and  taking the limit as $\epsilon\to0$, by the fundamental theorem of calculus and dominated convergence, 
    \begin{align*}
        0&\geq -\theta \psi(k_m ,h_m ) + \frac{1}{1-\sigma}\left(\frac{h_m^\gamma}{\bar{c}}\right)^{\sigma-1}  + (B(k_m -\bar{c})\partial_k\psi(k_m ,h_m ) + \rho(\bar{c}-h_m) )\partial_h \psi(k_m ,h_m ) \\ & \qquad
        + \frac{1}{2}\beta_1^2k_m ^2\partial_{kk}\psi(k_m ,h_m ) +\frac{1}{2}\beta_2^2h_m^2 \partial_{hh}\psi(k_m ,h_m )
        \\&=-\theta \psi(k_m ,h_m )+ 
        \mathcal{L}_{k_m,h_m}\psi
        +g(\bar{c};h_m,\partial_k\psi(k_m ,h_m ),\partial_h\psi(k_m ,h_m ))
        ,
    \end{align*}
    where the last equality employs the notation in \eqref{HJB_interm1}.
    Since $\bar{c}$ is chosen arbitrarily in $(0,Rk_m)$ and $g(\cdot;h_m,\partial_k\psi(k_m ,h_m ),\partial_h\psi(k_m ,h_m ))$ is continuous at $Rk_m$, by taking the $\sup$ for $\bar{c}\in(0,Rk_m]$ we conclude that 
    \[
\theta V(k_m,h_m)-H_{\text{max}}(k_m,h_m, \nabla\psi(k_m,h_m), D^2{\psi}(k_m,h_m))\ge0,
    \]
    hence $V$ is a viscosity supersolution of \eqref{eq:HJB-s}.\\

  \underline{\emph{Step \upperRomannumeral{2}}}.
 Consider $\psi \in C^2(\mathbb{R}_{++}^2; \mathbb{R} )$ and a local maximum point $(k_M,h_M) \in\mathbb{R}_{++}^2$ for $V-\psi$. Without loss of generality, we assume $V(k_M,h_M)=\psi(k_M,h_M)$, hence there exists a $\delta>0$ such that $V(k,h)\leq \psi(k,h)$, $(k,h)\in B_{\delta} (k_M,h_M)\subset \mathbb{R}_{++}^2$. We suppose by contradiction that $V$ \emph{is not} a viscosity subsolution of \eqref{eq:HJB-s}; by Definition \ref{def_viscosity}, this means that there exists $\eta_1>0$ such that
    \begin{equation*}
        \eta_1 < \theta V(k_M,h_M)- \mathcal{L}_{k_M,h_M}\psi
        -
        G(k_M,h_M, \partial_k\psi(k_M,h_M),\partial_h\psi(k_M,h_M)).
    \end{equation*}
    By the continuity of $V$, see Theorem \ref{prop_cont}, since  $G$ in \eqref{G_expression} is jointly continuous in its arguments and $\psi \in C^2$, we can find an $\epsilon\in(0,\delta)$ such that, for any $(k,h)\in B_{\epsilon}(k_M,h_M),$
    \begin{equation}\label{inter_subsolution}
\frac{\eta_1}{2} < \theta V(k,h)- \mathcal{L}_{k,h}\psi +c (\partial_k\psi(k,h)-\rho\partial_h\psi(k,h))+\frac{1}{\sigma-1}\frac{h^{\gamma(\sigma-1)}}{c^{\sigma-1}},\quad  c\in(0,Rk].
    \end{equation}
    For any admissible control $\bar{c}\in\mathcal{A}(k_M,h_M)$, we define the stopping time 
    \[
\tau_{\epsilon}=\inf\left\{t\geq 0: \Big(k_t^{k_M,0,\bar{c}},h_t^{h_M,0,\bar{c}}\Big)\notin B_{\epsilon}(k_M,h_M) \right\};
    \]   
    notice that $\big(k^{k_M,0,\bar{c}},h^{h_M,0,\bar{c}}\big)\in B_\delta(k_M,h_M)$ in the random interval $[0,\tau_\epsilon]$ as $\epsilon<\delta$.
    Using $\tau_{\epsilon}$, from \eqref{inter_subsolution} we compute, denoting by $\bar{h}$ (resp., $\bar{k}$) the process $h^{h_M,0,\bar{c}}$ (resp., $k^{k_M,0,\bar{c}}$) to shorten the notation,
\begin{align}\notag
    &\frac{\eta_1}{2}\mathbb{E}\bigg[ \int_{0}^{\tau_{\epsilon}\wedge 1} e^{-\theta t} \de t\bigg] - \mathbb{E}\bigg[ \int_{0}^{\tau_{\epsilon}\wedge 1} \frac{e^{-\theta t}}{\sigma-1}\left(\frac{\bar{h}^\gamma_t}{\bar{c}_t}\right)^{\sigma-1} \de t \bigg]  \\\label{eq:step2a}
    &\qquad <  \mathbb{E}\bigg[ \int_{0}^{\tau_{\epsilon}\wedge 1} e^{-\theta t}\Big( \theta \psi\big(\bar{k}_t,\bar{h}_t\big) - \mathcal{L}_{\bar{k}_t,\bar{h}_t}\psi  +\bar{c}_t (\partial_k\psi(\bar{k}_t,\bar{h}_t)-\rho\partial_h\psi(\bar{k}_t,\bar{h}_t))\Big) \de t \bigg].
\end{align}
Since $\mathbb{E}[\tau_\epsilon\wedge 1]\le 1 <\infty$, an application Dynkin's formula as in \underline{\emph{Step \upperRomannumeral{1}}} yields 
\begin{align}\notag
   & \mathbb{E}\Big[ e^{-\theta (\tau_{\epsilon}\wedge 1)}\psi\big(\bar{k}_{\tau_{\epsilon}\wedge 1},\bar{h}_{\tau_{\epsilon}\wedge 1}\big)  \Big] 
    - \psi(k_M,h_M) \\
    &\label{eq:step2D}\qquad
    = \mathbb{E} \bigg[ \int_0^{\tau_{\epsilon}\wedge 1}\!\!e^{-\theta t}\Big(\!-\theta \psi\big(\bar{k}_t,\bar{h}_t\big)   + \mathcal{L}_{\bar{k}_t,\bar{h}_t}\psi - \bar{c}_t(
\partial_k\psi(\bar{k}_t,\bar{h}_t)-\rho\partial_h\psi(\bar{k}_t,\bar{h}_t))
    \Big) \de t \bigg].
\end{align}
Therefore, combining \eqref{eq:step2a} and \eqref{eq:step2D},
\begin{align}
    V(k_M,h_M) >& \mathbb{E}\bigg[ \int_{0}^{\tau_{\epsilon}\wedge 1} \frac{e^{-\theta t}}{1-\sigma}\left(\frac{\bar{h}^\gamma_t}{\bar{c}_t}\right)^{\sigma-1}  \de t  + e^{-\theta (\tau_{ \epsilon}\wedge 1)} V\big(\bar{k}_{\tau_{\epsilon}\wedge 1},\bar{h}_{\tau_{ \epsilon}\wedge 1}\big)\bigg]  \\&+  \frac{\eta_1}{2}\mathbb{E}\bigg[ \int_{0}^{\tau_{\epsilon}\wedge 1} e^{-\theta t} \de t\bigg]. \label{eq:step2T}
\end{align}
We now focus on the last addend in \eqref{eq:step2T}. Inspired by \cite[Equation (2.49)]{choulli2003diffusion}, we define the $C^2-$function $w\colon \overline{B}_\epsilon(k_M,h_M)\to (-\infty, 0]$ by 
\begin{equation*}
w(k,h)=\frac{1}{C} \big((k-k_M)^2+(h-h_M)^2-\epsilon^2\big),\quad (k,h)\in  \overline{B}_\epsilon(k_M,h_M),
\end{equation*}
where $C\in\mathbb{R}_+$ is the positive constant given by
\begin{multline*}
    C\coloneqq \sup_{(k,h)\in \overline{B}_\epsilon(k_M,h_M)}\sup_{c\in (0,Rk]}
 \Big(2B|k-c||k-k_M| + 2\rho|c-h||h-h_M|
 + \beta_1^2k^2 +\beta_2^2h^2 \\
 +\theta(
 |k-k_M|^2+|h-h_M|^2+\epsilon^2)\Big)
.
\end{multline*}
Thanks to this choice of $C$, employing again Dynkin's formula with the function $(t,k,h)\mapsto e^{-\theta t}w(k,h)$, we infer that 
\begin{align}
   \notag&\mathbb{E}\Big[  e^{-\theta (\tau_\epsilon\wedge 1)} w(\bar{k}_{\tau_\epsilon\wedge 1},\bar{h}_{\tau_\epsilon\wedge 1}) \Big] - 
    w(k_M,h_M)
    \\&\qquad =\frac1C
    \mathbb{E}\bigg[
    \int_0^{\tau_\epsilon\wedge 1}e^{-\theta t}\Big(-\theta w(\bar{k}_t,\bar{h}_t)+2 \Big[\begin{smallmatrix}
		B\bar{k}_t-\bar{c}_t \\ \rho(\bar{c}_t-\bar{h}_t)
		\end{smallmatrix}\Big]^{\top}
        \Big[\begin{smallmatrix}
		\bar{k}_t-k_M \\ \bar{h}_t-h_M
		\end{smallmatrix}\Big]
 + \beta_1^2\bar{k}_t^2 +\beta_2^2\bar{h}_t^2 
    \Big)
    \de t
    \bigg]\notag
    \\&\label{inter_sub3}\qquad \le \mathbb{E}\bigg[\int_0^{\tau_\epsilon\wedge 1}e^{-\theta t}\de t  \bigg]
    .
\end{align}
Since $w\equiv 0$ on $\partial B_\epsilon(k_M,h_M)$ and $w\ge -\frac1C\epsilon^2$ in its domain, by the continuity of $\bar{k}$ and $\bar{h}$ we have 
\begin{align*}
\mathbb{E}\Big[  e^{-\theta (\tau_\epsilon\wedge 1)} w(\bar{h}_{\tau_\epsilon\wedge 1},\bar{h}_{\tau_\epsilon\wedge 1}) \Big] 
&=
\mathbb{E}\Big[ e^{-\theta{\tau_\epsilon}}w(\bar{k}_{\tau_\epsilon},\bar{h}_{\tau_\epsilon})1_{\{\tau_\epsilon\le1\}}\Big]
+
\mathbb{E}\Big[ e^{-\theta}w(\bar{k}_1,\bar{h}_1)1_{\{\tau_\epsilon>1\}}\Big]
\\&
\ge 0-\frac1C\epsilon^2e^{-\theta}\mathbb{P}(\tau_\epsilon>1)\ge -\frac1C\epsilon^2 e^{-\theta}
    .
\end{align*} Consequently,
noticing also that $w(k_M,h_M)=-\frac1C\epsilon^2$, \eqref{inter_sub3} gives 
\begin{equation*}
    \mathbb{E} \bigg[\int_{0}^{\tau_\epsilon\wedge 1} e^{-\theta t} \de t\bigg] \ge \frac1C\epsilon^2(1-e^{-\theta})\eqqcolon\eta_2>0.
\end{equation*}
Thus, \eqref{eq:step2T} implies that 
\begin{equation*}
   V(k_M,h_M) > \mathbb{E}\bigg[ \int_{0}^{\tau_{\epsilon}\wedge 1} \frac{e^{-\theta t}}{1-\sigma}\left(\frac{\bar{h}^\gamma_t}{\bar{c}_t}\right)^{\sigma-1}  \de t  + e^{-\theta (\tau_{\epsilon}\wedge 1)} V\big(\bar{k}_{\tau_{ \epsilon}\wedge 1},\bar{h}_{\tau_{ \epsilon}\wedge 1}\big)\bigg]  +  \frac{\eta_1}{2}\eta_2,
\end{equation*}
We recall that $\bar{c}\in\mathcal{A}(k_M,h_M)$ is arbitrary and observe that the constant $\eta_2>0$ is independent of $\bar{c}$. Therefore, taking the supremum
over $\mathcal{A}(k_M,h_M)$ in the previous estimate, we conclude that
\[
 V(k_M,h_M) > \sup_{c\in \mathcal{A}(k_M,h_M)}\mathbb{E}\bigg[ \int_{0}^{\tau_{\epsilon}\wedge 1} \frac{e^{-\theta t}}{1-\sigma}\left(\frac{({h^{h_M,0,c}_t})^\gamma}{{c}_t}\right)^{\sigma-1}  \de t  + e^{-\theta (\tau_{\epsilon}\wedge 1)} V\big(\bar{k}_{\tau_{ \epsilon}\wedge 1},\bar{h}_{\tau_{ \epsilon}\wedge 1}\big)\bigg]  .
\]
This violates the dynamic programming principle in Proposition \ref{DPP}. It follows that $V$ is a viscosity subsolution of \eqref{eq:HJB-s}, and the proof is complete.
     \end{proof}

	\subsection{$V$ is a classical solution of the HJB equation}\label{sec_classical}
	
    We recall the definition of classical solution of the HJB equation \eqref{eq:HJB-s}. 
    \begin{definition}\label{def:classical}
    Let $D$ be an open subset of $\mathbb{R}^{2}_{++}$. A function $v\in C^{2}(D;\mathbb{R})$ is called a \emph{classical solution} of \eqref{eq:HJB-s} if 
    \[
\theta v(k,h) = H_{\text{max}}(k,h,\nabla v(k,h), D^2v(k,h)),\quad (k,h)\in D.
    \]
    \end{definition}
In the next theorem, we combine the regularity properties of $H_\text{max}$ obtained in Section \ref{sec_HJB} with fundamental results in \cite{crandall1992user, safonov1989classical} to demonstrate that the value function $V$ is a  classical solution of \eqref{eq:HJB-s} in $\mathbb{R}^2_{++}$. This also shows that  $V$ is a regular function of class $C^2(\mathbb{R}^2_{++}; \mathbb{R})$.
    \begin{theorem}\label{classical_sol}
       Suppose that   \eqref{hyp_Lip_h} is fulfilled. Then the value function $V$ satisfies the HJB equation \eqref{eq:HJB-s} in $\mathbb{R}_{++}^2$ in the classical sense.
    \end{theorem}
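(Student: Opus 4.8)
The plan is to argue by local regularization. Fix an arbitrary point $P_0=(k_0,h_0)\in\mathbb{R}^2_{++}$ and choose $r>0$ small enough that the closed ball $\overline{B}\coloneqq\overline{B}_r(P_0)$ is contained in $\mathbb{R}^2_{++}$; write $B\coloneqq B_r(P_0)$. On $\overline{B}$ the entries $\beta_1^2k^2$ and $\beta_2^2h^2$ of $\Sigma(k,h)\Sigma(k,h)^\top$ are bounded above and below by positive constants, so the operator $H_{\text{max}}$ in \eqref{eq:HJB-s} is uniformly elliptic on $B$; moreover, as recalled in Section \ref{sec_HJB}, $H_{\text{max}}$ has Bellman form (a supremum over $c\in(0,Rk]$ of linear operators plus a running cost) and, after carrying out the supremum, it is given by the right-hand side of \eqref{eq:HJB-s} with $G$ as in \eqref{G_expression}. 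Since $V$ is continuous by Theorem \ref{prop_cont}, it is bounded on $\overline{B}$, so $V|_{\partial B}$ is a continuous boundary datum. The strategy is: (i) solve the Dirichlet problem for \eqref{eq:HJB-s} on $B$ with datum $V|_{\partial B}$, obtaining a classical solution $u$; (ii) prove $u\equiv V$ on $B$ by a comparison argument and conclude by letting $P_0$ vary.

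\emph{Step (i): existence of a classical solution on $B$.} I would invoke the solvability and interior regularity theory for fully nonlinear uniformly elliptic Bellman equations of \cite{safonov1989classical}. Its structural hypotheses are precisely those recorded in Lemma \ref{lemm:safonov} for the map $\tilde{F}$ in \eqref{def_Ftilde}: the uniform Lipschitz dependence on $(p,v)$ in \eqref{ineq:Saf2} and the H\"older-seminorm bound \eqref{eq_saf} on the $(k,h)$-dependence, together with the uniform ellipticity on $B$ and the convexity in $(Q,p)$ inherited from the $\sup_c$ structure. This produces a function $u\in C^2(B)\cap C(\overline{B})$ -- in fact $u\in C^{2,\alpha}_{\mathrm{loc}}(B)$ for some $\alpha\in(0,1)$ -- satisfying $\theta u=H_{\text{max}}(k,h,\nabla u,D^2u)$ in $B$ and $u=V$ on $\partial B$; here one uses that the ball is a regular domain, so the boundary datum is attained continuously.

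\emph{Step (ii): identification $u\equiv V$ and conclusion.} A classical solution of \eqref{eq:HJB-s} is in particular a continuous viscosity solution of \eqref{eq:HJB-s} in $B$; on the other hand $V$ is a viscosity solution of \eqref{eq:HJB-s} in $\mathbb{R}^2_{++}\supset B$ by Theorem \ref{prop:VisVisc}, and $u=V$ on $\partial B$. It therefore suffices to establish a comparison principle on $B$: if $\underline{v}$ (resp.\ $\overline{v}$) is a viscosity subsolution (resp.\ supersolution) of \eqref{eq:HJB-s}, continuous on $\overline{B}$, with $\underline{v}\le\overline{v}$ on $\partial B$, then $\underline{v}\le\overline{v}$ in $B$. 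This is obtained by the standard doubling-of-variables argument together with the maximum principle for semicontinuous functions (the Theorem on Sums) of \cite{crandall1992user}: one uses $\theta>0$, the uniform ellipticity on $B$, the Lipschitz estimate \eqref{ineq:Saf2} in $(p,v)$, and -- crucially -- the estimate \eqref{est_user} of Lemma \ref{lemm:userguide} for matrices $Q,\bar{Q}$ subject to \eqref{ineq:QIQ}, which controls the error terms generated by the doubling. Applying comparison first with $(\underline{v},\overline{v})=(u,V)$ and then with $(V,u)$ gives $u=V$ on $\overline{B}$; hence $V=u\in C^2(B)$ and $\theta V(k,h)=H_{\text{max}}(k,h,\nabla V(k,h),D^2V(k,h))$ for all $(k,h)\in B$. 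Since $P_0$ was an arbitrary point of $\mathbb{R}^2_{++}$ and $B$ a neighbourhood of it, $V\in C^2(\mathbb{R}^2_{++};\mathbb{R})$ and solves \eqref{eq:HJB-s} in the classical sense throughout $\mathbb{R}^2_{++}$.

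\emph{Main obstacle.} The delicate point is that Safonov's theory is phrased for Bellman equations with a fixed control set and a well-behaved running cost, whereas here the admissible set $(0,Rk]$ depends on the state and the cost $\tfrac{1}{1-\sigma}(h^\gamma/c)^{1-\sigma}$ is singular as $c\downarrow 0$; one must therefore work with the post-supremum nonlinearity $G$ of \eqref{G_expression} and check that $H_{\text{max}}$ verifies the precise Safonov-type estimates -- which is exactly the content of Lemma \ref{lemm:safonov}, whose proof (deferred to Appendix \ref{Appendix}) is the technical heart of the argument, and likewise of Lemma \ref{lemm:userguide} for the comparison step. A secondary difficulty is that the ellipticity of $H_{\text{max}}$ degenerates as $k\to 0$ or $h\to 0$, which forces the scheme to be run locally on balls $\overline{B}\subset\mathbb{R}^2_{++}$ and then patched together; one should also confirm that the comparison principle genuinely applies to the a priori merely continuous viscosity solution $V$, which it does, since \eqref{ineq:Saf2} and \eqref{est_user} impose no regularity on the competing functions.
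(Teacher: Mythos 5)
Your proposal matches the paper's proof essentially verbatim: it localizes to a ball $\overline{B}_{\delta_0}(k_0,h_0)\subset\mathbb{R}^2_{++}$, invokes Safonov's solvability theory (justified by Lemma \ref{lemm:safonov}) to produce a classical solution with boundary datum $V$, and identifies it with $V$ via the Crandall--Ishii--Lions comparison theorem (Theorem 3.3 of \cite{crandall1992user}, enabled by Lemma \ref{lemm:userguide} and Theorem \ref{prop:VisVisc}). Your unpacking of the comparison step via doubling of variables and the Theorem on Sums is just the internal mechanism of the same cited result, so there is no substantive difference.
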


    \begin{proof}
      Fix a generic $(k_0,h_0)\in\mathbb{R}^2_{++}$ and consider $\delta_0>0$ such that $\overline{B}_{\delta_0}(k_0,h_0)\subset \mathbb{R}^2_{++}$.  Thanks to Lemma \ref{lemm:safonov} and Theorem \ref{prop_cont}, we can apply Theoreom 1.1 in \cite{safonov1989classical} to state that, for some $\alpha\in(0,1),$ the HJB equation
      \eqref{eq:HJB-s} has a unique classical solution $v$ in $B_{\delta_0}(k_0,h_0)$ in the class 
      $$C^{2+\alpha}(B_{\delta_0(k_0,h_0)};\mathbb{R}) \cap C(\overline{B}_{\delta_0}(k_0,h_0);\mathbb{R})$$
      such that $v=V$ on $\partial B_{\delta_0}(k_0,h_0)$. 
      In particular, $v$ is also a (continuous) viscosity solution of \eqref{eq:HJB-s} in $B_{\delta_0}(k_0,h_0)$. Therefore, by Theorem 3.3 in \cite{crandall1992user} -- which can be applied by Lemma \ref{lemm:userguide} and Theorem \ref{prop:VisVisc} -- we deduce that 
     $$ V=v\quad \text{ in }\quad \overline{B}_{\delta_0}(k_0,h_0). $$  
     Since $(k_0,h_0)$ is arbitrary, the proof is complete. 
    \end{proof}
   \begin{remark}
       From the proof of Theorem \ref{classical_sol}, it follows that, for every open bounded set $D$ such that $\overline{D}\subset \mathbb{R}^2_{++}$,  the value function  $V$ is of class $C^{2+\alpha}(D;\mathbb{R})$, for some $\alpha\in(0,1)$ possibly dependent on $D$.
   \end{remark}

    \section{Verification theorem}\label{sec_verification}

 In this section, we  provide a sufficient condition for optimality of controls using the HJB equation. We start with a preliminary lemma. 
 \begin{lemma}\label{lemma_prel}
     Fix $(k_0,h_0)\in\mathbb{R}^2_{++}.$ Then, for every $p\ge 1$, there exists a constant $C>0$ dependent on $p$ and the parameters of the model  such that
     \begin{equation}\label{prel_estimate_ver}
         \mathbb{E}\Big[|(k_t,h_t)|^p\Big]\le  C\Big(1+t^{p-\frac12}\Big)
|(h_0,k_0)|^p 
\exp\Big\{
p \Big(
B+\frac{1}{2}\big((2p-1)\beta_1^2+4p\beta_2^2\big)
\Big)t
\Big\},\quad t\ge0,
     \end{equation}
     for every admissible control $c\in \mathcal{A}(k_0,h_0).$
 \end{lemma}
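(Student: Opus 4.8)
The plan is to reduce, via the elementary inequality $|(k_t,h_t)|^p\le (k_t+h_t)^p\le 2^{p-1}(k_t^p+h_t^p)$ (valid since $p\ge 1$ and $k_t,h_t>0$), to separate bounds for $\mathbb{E}[k_t^p]$ and $\mathbb{E}[h_t^p]$, each obtained from the explicit solution formulas \eqref{expl_eq_k}--\eqref{expl_eq_h} and moments of log-normal random variables. For the capital: since $c>0$, the subtracted integral in \eqref{expl_eq_k} is nonnegative, so pathwise $0<k_t\le k_0\,\mathcal{E}(Bt+\beta_1 W_{1,t})$, whence $\mathbb{E}[k_t^p]\le k_0^p\exp\{p(B+\tfrac12(p-1)\beta_1^2)t\}$, a rate already dominated by the one in \eqref{prel_estimate_ver}.

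For the habit, I would use the admissibility constraint \eqref{bdd_control}: combining $c\le Rk$ with the bound on $k$ just obtained gives $c_s\le Rk_0\,\mathcal{E}(Bs+\beta_1 W_{1,s})$, $\mathbb{P}\otimes\de t$-a.e. Substituting this into \eqref{expl_eq_h} yields $h_t\le \mathcal{E}(-\rho t+\beta_2 W_{2,t})\big(h_0+\rho R k_0\int_0^t Z_s\,\de s\big)$ with $Z_s:=\mathcal{E}(Bs+\beta_1 W_{1,s})/\mathcal{E}(-\rho s+\beta_2 W_{2,s})$. Raising to the power $p$ and applying $(a+b)^p\le 2^{p-1}(a^p+b^p)$, the $h_0$-term is again a log-normal moment with an admissible rate. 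For the cross term $\mathbb{E}\big[\mathcal{E}(-\rho t+\beta_2 W_{2,t})^p(\int_0^t Z_s\,\de s)^p\big]$ I would use the Cauchy--Schwarz inequality (in expectation) to detach the two factors, then Jensen's inequality $(\int_0^t Z_s\,\de s)^{2p}\le t^{2p-1}\int_0^t Z_s^{2p}\,\de s$ and Tonelli to arrive at $t^{2p-1}\int_0^t \mathbb{E}[Z_s^{2p}]\,\de s$; by independence of $W_1,W_2$ and the log-normal formula, $\mathbb{E}[Z_s^{2p}]=e^{\mu s}$ with $\mu=2p(B+\rho)+p(2p-1)\beta_1^2+p(2p+1)\beta_2^2>0$ (this uses $B\ge 0$, $\rho>0$, $\beta_1,\beta_2>0$, $p\ge 1$). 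Since $\mu>0$, $\int_0^t e^{\mu s}\,\de s\le \mu^{-1}e^{\mu t}$, and it is exactly this — rather than the cruder $\int_0^t e^{\mu s}\de s\le t e^{\mu t}$ — that produces the polynomial order $t^{p-1/2}$ in \eqref{prel_estimate_ver} after taking the square root.

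Finally, I would collect the three contributions ($\mathbb{E}[k_t^p]$, the $h_0$-term, the cross term) and verify that each exponential rate is bounded by $p\big(B+\tfrac12((2p-1)\beta_1^2+4p\beta_2^2)\big)$: a short comparison of quadratics in $p$ in which, for the cross term, the $\beta_2^2$-contributions from $(\mathbb{E}[\mathcal{E}(-\rho t+\beta_2 W_{2,t})^{2p}])^{1/2}$ and from $\mu/2$ recombine to exactly $2p\beta_2^2$, while the $\beta_1^2$-part gives $\tfrac12 p(2p-1)\beta_1^2$. Bounding $k_0^p,h_0^p\le |(h_0,k_0)|^p$ and absorbing the numerical and $\mu$-dependent constants into $C$ then delivers \eqref{prel_estimate_ver}. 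The main obstacle is the coupling between the two equations — the estimate on $h$ has to be driven by the estimate on $k$ through the constraint $c\le Rk$ — and then the several $\beta_1^2$- and $\beta_2^2$-rates coming from the nested log-normal moments must be tracked carefully so that they add up precisely to the claimed exponent; obtaining the sharp polynomial power $t^{p-1/2}$ further hinges on the strict positivity of $\mu$.
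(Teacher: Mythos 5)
Your proof is correct and follows essentially the same route as the paper's: bound $k$ by the driving geometric Brownian motion since $c>0$, feed $c\le Rk\le Rk_0\,\mathcal{E}(B\cdot+\beta_1W_1)$ into the explicit formula for $h$, split the resulting expectation by Cauchy--Schwarz, apply Jensen to get $\big(\int_0^t Z_s\,\de s\big)^{2p}\le t^{2p-1}\int_0^t Z_s^{2p}\,\de s$, use Tonelli/independence and log-normal moments, and absorb $\int_0^t e^{\mu s}\,\de s\le \mu^{-1}e^{\mu t}$ (with $\mu>0$) into the constant, which is exactly what gives the sharp factor $t^{p-1/2}$. The constant bookkeeping you describe matches the paper's computation leading to the exponent $p\big(B+\tfrac12((2p-1)\beta_1^2+4p\beta_2^2)\big)$.
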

 \begin{proof}
     Fix $p\ge1$. Given $(k_0,h_0)\in\mathbb{R}^2_{++}$ and $c\in\mathcal{A}(k_0,h_0),$ from the explicit expression of the capital process $k$ in \eqref{expl_eq_k} we compute
     \begin{equation}\label{kestimate}
         \mathbb{E}[|k_t|^p]\le k_0^pe^{p(Bt-\frac{1}{2}\beta_1^2t)}
         \mathbb{E}\Big[e^{p\beta_1W_{1,t}}\Big]
         =
         k_0^p \exp\Big\{p\Big(B+ \frac{1}{2}\beta_1^2 (p-1)
         \Big)t\Big\},\quad t\ge 0
         .
     \end{equation}
     Here, in the last equality we employ the fact that $\exp\{W_{1,t}\}$ has a $\log-$normal distribution, for every $t\ge0$. As for the habit process $h$ in \eqref{expl_eq_h},  by the control constraint \eqref{bdd_control} we compute, using also H\"older's inequality,  
     \begin{align}\label{est_ito_int}
\notag\mathbb{E}[|h_t|^p]&\le |h_0|^p\exp\Big\{p\Big(
-\rho +\frac{1}{2}\beta_2^2(p-1) \Big)t\Big\}
    \\&\quad+\notag
    (k_0R\rho)^p\mathbb{E}\bigg[
    \big(\mathcal{E}(-\rho t+\beta_2 W_{2,t})\big)^p \left(\int_0^t\frac{\mathcal{E}(Bs+\beta_1W_{1,s})}{\mathcal{E}(-\rho s+\beta_2W_{2,s})}\de s\right)^p
    \bigg]
    \\&\le \
   \exp\Big\{p\Big(
-\rho +\frac{1}{2}\beta_2^2(p-1) \Big)t\Big\}
\bigg( |h_0|^p+(k_0R\rho)^pe^{\frac{1}{2}\beta_2^2p^2t} \times\mathbf{\upperRomannumeral{1}}_t\bigg),\quad t\ge 0,
 \end{align}
 where we define 
 \[
 \mathbf{\upperRomannumeral{1}}_t
 =
 \mathbb{E}\bigg[\bigg(\int_0^t
\exp\Big\{
\Big(B+\rho-\frac{1}{2}(\beta_1^2-\beta_2^2)\Big)s
+\beta_1W_{1,s}-\beta_2W_{2,s}\Big\}\de s
\bigg)^{2p}\bigg]^{\frac{1}2},\quad t\ge 0
.
 \]
By Jensen's inequality, Fubini's theorem  and the independence of $W_1$ and $W_2$, 
\[
\mathbf{\upperRomannumeral{1}}_t^2\le t^{2p-1}
\int_{0}^t\exp\Big\{
2p \Big(
B+\rho+\frac{1}{2}\big((2p-1)\beta_1^2+(2p+1)\beta_2^2\big)
\Big)s
\Big\}\de s.
\]
Therefore, denoting by $C>0$ a constant possibly dependent on $p$ and the parameters of the model,
from \eqref{est_ito_int} we deduce that, for every $t\ge 0$, 
\begin{align*}
    \mathbb{E}[|h_t|^p]&\le C\exp\Big\{p\Big(
-\rho +\frac{1}{2}\beta_2^2(p-1) \Big)t\Big\} 
|(h_0,k_0)|^p 
\\&\qquad \times \Big(1+t^{p-\frac{1}{2}}
\exp\Big\{
p \Big(
B+\rho+\frac{1}{2}\big((2p-1)\beta_1^2+(3p+1)\beta_2^2\big)
\Big)t
\Big\}\Big)
\\&\le
C\Big(1+t^{p-\frac12}\Big)
|(h_0,k_0)|^p 
\exp\Big\{
p \Big(
B+\frac{1}{2}\big((2p-1)\beta_1^2+4p\beta_2^2\big)
\Big)t
\Big\}
.
\end{align*} 
Combining this estimate with \eqref{kestimate} we obtain \eqref{prel_estimate_ver}. The proof is then complete.
\end{proof}
 We now impose the following assumptions on the model parameters and the growth of a map $v$.
    \begin{assumption}\label{assumptionVT}
Given  a twice-differentiable function  $v: \mathbb{R}^2_{++}\to \mathbb{R}$, suppose that $v$ and  $\nabla v$  satisfy the polynomial growth condition 
            \begin{equation*}
                |v(k,h)|+|\nabla v(k,h)| \leq C(1+ |(k,h)|^p),\quad (k,h)\in\mathbb{R}^2_{++},
            \end{equation*}
            for some constant $C>0.$ Here, $p\ge1$ and satisfies, together with the model parameters, the following inequality: 
            \[
            \theta>p \Big(
B+\frac{1}{2}\big((2p-1)\beta_1^2+4p\beta_2^2\big)
\Big). 
            \]
    \end{assumption}

\begin{theorem}[Smooth Verification, Sufficient Condition]\label{teo:sufficient} Let $v:\mathbb{R}^2_{++}\to\mathbb{R}$ be a classical solution of the HJB equation \eqref{eq:HJB-s},  see Definition \ref{def:classical}, which satisfies Assumption \ref{assumptionVT}.  Then
    \begin{equation}\label{eq_classicalvalue}
        v(k,h)\geq V(k,h),\quad (k,h)\in \mathbb{R}^2_{++},
    \end{equation}
    where $V$ is the value function in \eqref{def:valuefunction}.\vspace{1mm}\\
 In addition, given $(k_0,h_0)\in\mathbb{R}^2_{++}$, suppose that  $c^*$ is an admissible control in $\mathcal{A}(k_0,h_0)$ such that, denoting by $k^*$ (resp., $h^*$) the process $k^{k_0,0,c^\ast}$ (resp., $h^{h_0,0,c^\ast}$), 
    \begin{equation}\label{eq:condition}
        c^*_s\in\arg \max_{c\in(0,Rk^*_s]}g\big(c;h^*_s, \partial_k v(k^*_s,h^*_s), \partial_h v(k^*_s,h^*_s)\big),\quad \text{for a.e. $s\in [0,\infty)$, $\mathbb{P}-$a.s.,}
    \end{equation}
    where $g$ is defined in \eqref{HJB_interm1}. 
    Then  $c^*$  is optimal at $(k_0,h_0)$, and $$v(k_0,h_0)=V(k_0,h_0).$$
\end{theorem}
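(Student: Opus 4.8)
The plan is to run the classical \emph{smooth verification} argument adapted to the infinite-horizon singular setting, in two halves: first the inequality $v\ge V$, then the optimality of $c^*$. The common device is Dynkin's formula applied to the process $t\mapsto e^{-\theta t}v(k_t,h_t)$ along an arbitrary admissible control, combined with the HJB equation and the polynomial growth bounds of Lemma~\ref{lemma_prel}.

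\emph{Step 1 (the bound $v\ge V$).} Fix $(k_0,h_0)\in\mathbb{R}^2_{++}$ and $c\in\mathcal{A}(k_0,h_0)$, and write $(k_t,h_t)$ for the corresponding state. Since $v\in C^2(\mathbb{R}^2_{++};\mathbb{R})$ and the trajectories stay in $\mathbb{R}^2_{++}$ but may approach the boundary, I would localize: introduce stopping times $\tau_n=\inf\{t\ge 0:(k_t,h_t)\notin D_n\}$ for an increasing exhaustion $D_n\uparrow\mathbb{R}^2_{++}$ by bounded open sets with $\overline{D_n}\subset\mathbb{R}^2_{++}$, and apply Dynkin's formula to $e^{-\theta t}v(k_t,h_t)$ on $[0,\tau_n\wedge T]$. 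This yields
\begin{align*}
\mathbb{E}\big[e^{-\theta(\tau_n\wedge T)}v(k_{\tau_n\wedge T},h_{\tau_n\wedge T})\big]-v(k_0,h_0)
=\mathbb{E}\bigg[\int_0^{\tau_n\wedge T}e^{-\theta t}\big(-\theta v+\mathcal{L}_{k_t,h_t}v-c_t(\partial_k v-\rho\partial_h v)\big)(k_t,h_t)\,\de t\bigg],
\end{align*}
where the stochastic integral term has zero expectation because the integrand involves $\nabla v$ times $\Sigma$, which is controlled on $D_n$. Since $v$ solves \eqref{eq:HJB-s}, for every $t$ and every $c_t\in(0,Rk_t]$ we have $\theta v-\mathcal{L}_{k,h}v\ge g(c_t;h_t,\partial_k v,\partial_h v)$, i.e. $-\theta v+\mathcal{L}_{k,h}v-c_t(\partial_k v-\rho\partial_h v)\le \tfrac{1}{1-\sigma}(h_t^\gamma/c_t)^{\sigma-1}$; substituting and rearranging,
\begin{equation*}
v(k_0,h_0)\ge \mathbb{E}\bigg[\int_0^{\tau_n\wedge T}\frac{e^{-\theta t}}{1-\sigma}\Big(\frac{h_t^\gamma}{c_t}\Big)^{\sigma-1}\de t\bigg]+\mathbb{E}\big[e^{-\theta(\tau_n\wedge T)}v(k_{\tau_n\wedge T},h_{\tau_n\wedge T})\big].
\end{equation*}
Now I let $n\to\infty$ and $T\to\infty$. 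The integral term converges to $J(k_0,h_0;c)$ by monotone convergence (the integrand is $\le 0$). For the boundary term I use Assumption~\ref{assumptionVT}: $|v(k,h)|\le C(1+|(k,h)|^p)$ and, by Lemma~\ref{lemma_prel}, $\mathbb{E}[|(k_t,h_t)|^p]$ grows at rate $\exp\{p(B+\tfrac12((2p-1)\beta_1^2+4p\beta_2^2))t\}$, which is strictly dominated by $e^{\theta t}$ precisely by the inequality in Assumption~\ref{assumptionVT}; hence $\mathbb{E}[e^{-\theta(\tau_n\wedge T)}|v(k_{\tau_n\wedge T},h_{\tau_n\wedge T})|]\to 0$ (here one may pass to the limit in $n$ first using e.g.\ that $e^{-\theta(\tau_n\wedge T)}v(\cdot)$ is uniformly integrable by the $p$-moment bound, then let $T\to\infty$). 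This gives $v(k_0,h_0)\ge J(k_0,h_0;c)$; taking the supremum over $c\in\mathcal{A}(k_0,h_0)$ yields \eqref{eq_classicalvalue}.

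\emph{Step 2 (optimality of $c^*$).} I repeat the computation with the specific control $c^*$ and its state $(k^*,h^*)$. The only change is that, by the pointwise maximization condition \eqref{eq:condition}, the inequality $\theta v-\mathcal{L}_{k,h}v\ge g(c^*_t;h^*_t,\partial_k v,\partial_h v)$ becomes an \emph{equality} for a.e.\ $t$, $\mathbb{P}$-a.s., because $G(k,h,\partial_k v,\partial_h v)=\sup_{c\in(0,Rk]}g(c;h,\partial_k v,\partial_h v)=g(c^*;h,\partial_k v,\partial_h v)$ along the optimal state and $v$ solves the HJB equation. Consequently the inequality above becomes an equality:
\begin{equation*}
v(k_0,h_0)=\mathbb{E}\bigg[\int_0^{\tau_n\wedge T}\frac{e^{-\theta t}}{1-\sigma}\Big(\frac{(h^*_t)^\gamma}{c^*_t}\Big)^{\sigma-1}\de t\bigg]+\mathbb{E}\big[e^{-\theta(\tau_n\wedge T)}v(k^*_{\tau_n\wedge T},h^*_{\tau_n\wedge T})\big],
\end{equation*}
and letting $n\to\infty$, $T\to\infty$ as before gives $v(k_0,h_0)=J(k_0,h_0;c^*)$. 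Combining with Step~1, $J(k_0,h_0;c^*)=v(k_0,h_0)\ge V(k_0,h_0)\ge J(k_0,h_0;c^*)$, so all are equal: $c^*$ is optimal and $v(k_0,h_0)=V(k_0,h_0)$.

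\emph{Main obstacle.} The delicate point is the passage to the limit in the localization, i.e.\ controlling the boundary term $\mathbb{E}[e^{-\theta(\tau_n\wedge T)}v(k_{\tau_n\wedge T},h_{\tau_n\wedge T})]$ and justifying that the martingale part of Dynkin's formula has zero expectation despite $\nabla v$ only having polynomial growth and the state possibly degenerating near $\partial\mathbb{R}^2_{++}$. This is exactly what Assumption~\ref{assumptionVT} together with the moment estimate of Lemma~\ref{lemma_prel} is designed to handle: the growth rate of the $p$-th moments is strictly below $\theta$, which furnishes both a uniform integrability bound (killing the boundary term after $T\to\infty$) and integrability of the local-martingale integrand on each finite horizon (so the stopped stochastic integral is a true martingale). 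One must also check that the singular running cost, being nonpositive, causes no trouble — monotone convergence handles it directly — and that $c^*$ is genuinely admissible, which is part of the hypothesis. Everything else is the standard verification bookkeeping.
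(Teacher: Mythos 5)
Your argument is correct and follows the same verification template as the paper, with one genuine procedural difference in how the technical passage to expectations is organized. The paper applies It\^o's formula directly on the fixed horizon $[0,T]$ with no localization, then invokes Lemma~\ref{lemma_prel} and Assumption~\ref{assumptionVT} to check that the two stochastic integrals have square-integrable integrands (via Tonelli, since $\mathbb{E}[|k_t|^{2p}],\,\mathbb{E}[|h_t|^{2p}]$ are finite pointwise in $t$), so the stopped integrals are true martingales and expectations can be taken immediately; it then derives the exact ``fundamental identity'' with the term $g-G\le 0$ and passes $T\to\infty$ by monotone convergence. You instead localize with $\tau_n$ first and then pass $n\to\infty$. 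That is legitimate, but it creates a small extra burden the paper avoids: Lemma~\ref{lemma_prel} only bounds $\mathbb{E}[|(k_t,h_t)|^p]$ at deterministic times, not at the random time $\tau_n\wedge T$, so to justify the limit in $n$ by uniform integrability you would need a supremum-type moment estimate $\mathbb{E}[\sup_{t\le T}|(k_t,h_t)|^p]<\infty$ (obtainable from BDG, but not stated in the paper). The paper's direct route sidesteps this. Finally, note a harmless sign slip in your intermediate ``i.e.'' step: the correct rearrangement of $\theta v-\mathcal{L}_{k,h}v\ge g(c_t)$ is $-\theta v+\mathcal{L}_{k,h}v-c_t(\partial_kv-\rho\partial_hv)\le\frac{1}{\sigma-1}(h_t^\gamma/c_t)^{\sigma-1}$, not $\frac{1}{1-\sigma}(h_t^\gamma/c_t)^{\sigma-1}$; your subsequent display is nevertheless stated correctly.
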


\begin{proof}
Fix $(k_0,h_0)\in \mathbb{R}^2_{++}$ and consider, 
an admissible control $c\in\mathcal{A}(k_0,h_0)$. 
Denoting by  $k=k^{k_0,0,c}$ and $h=h^{h_0,0,c}$  the solutions of  \eqref{eq:k} and \eqref{eq:h}, respectively, by It\^o's formula we  deduce that
 \begin{align*}
         e^{-\theta T}v(k_T,h_T)  = &v(k_0,h_0) +\int_0^T e^{-\theta t}\Big( -\theta v(k_t,h_t) + \partial_kv(k_t,h_t)\big(Bk_t-c_t\big) + \partial_h v(k_t,h_t)\rho(c_t-h_t) \\&\qquad \qquad\qquad\qquad\quad
         +\frac{1}{2}\mathrm{tr}\big(\Sigma (k_t,h_t)^\top D^2v(k_t,h_t)\Sigma(k_t,h_t)\big)\Big ) \de t 
        \\&  +\beta_1
         \int_0^T e^{-\theta t} k_t\partial_kv(k_t,h_t)\de W_{1,t} +\beta_2\int_0^T e^{-\theta t} h_t\partial_hv(k_t,h_t)\de W_{2,t},\quad T>0,\,\mathbb{P}-\text{a.s.} 
    \end{align*}
    By Lemma \ref{lemma_prel} and Assumption \ref{assumptionVT}, we can take the expected value in the previous equation. This passage gives, by the martingale property of the stochastic integrals, 
     \begin{align*}
         &\mathbb{E}\Big[e^{-\theta T}v(k_T,h_T)\Big] + \mathbb{E}\bigg[\int_0^T \frac{e^{-\theta t}}{1-\sigma}\left(\frac{c_t}{h_t^\gamma}\right)^{1-\sigma} \de t \bigg]=
         v(k_0,h_0) + \mathbb{E}\bigg[\int_0^T e^{-\theta t}\Big( -\theta v(k_t,h_t) \\&\qquad + \partial_kv(k_t,h_t)\big(Bk_t-c_t\big) + \partial_h v(k_t,h_t)\rho(c_t-h_t) 
         +\frac{1}{2}\mathrm{tr}\big(\Sigma (k_t,h_t)^\top D^2v(k_t,h_t)\Sigma(k_t,h_t)\big) \\& \qquad 
         + \frac{1}{1-\sigma}\left(\frac{c_t}{h_t^\gamma}\right)^{1-\sigma} \bigg )\de t \bigg].
    \end{align*}
  Notice that the two sides of the previous equality might not be finite. Since $v$ is a classical solution of \eqref{eq:HJB-s},  we write
     \begin{align}
     &    \notag\mathbb{E}\Big[e^{-\theta T}v(k_T,h_T)\Big] +\mathbb{E}\bigg[\int_0^T \frac{e^{-\theta t}}{1-\sigma}\left(\frac{c_t}{h_t^\gamma}\right)^{1-\sigma} \de t \bigg]= v(k_0,h_0) \\
         &\qquad+ \mathbb{E}\bigg[\int_0^T e^{-\theta t}\Big( 
          g\big(c_t;h_t,\partial_kv(k_t,h_t), \partial_hv(k_t,h_t)\big)
          -G\big(k_t,h_t,\partial_kv(k_t,h_t), \partial_hv(k_t,h_t)\big) 
          \Big )\de t \bigg].
          \label{inter_suf}
    \end{align}
    Observe that, by definition of $g$ and $G$ (see \eqref{G_expression}), the last term in the right-hand side of \eqref{inter_suf} is nonpositive. Moreover, by Assumption \ref{assumptionVT} and Lemma \ref{lemma_prel}, 
    \[
    \lim_{T\to\infty}\mathbb{E}\Big[e^{-\theta T}v(k_T,h_T)\Big]=0.
    \] 
    Thus, by the monotone convergence theorem, we compute the {limit} as $T\to\infty $ in \eqref{inter_suf} to infer that 
   \begin{align}\notag
     &    \mathbb{E}\bigg[\int_0^\infty\frac{e^{-\theta t}}{1-\sigma}\left(\frac{c_t}{h_t^\gamma}\right)^{1-\sigma} \de t \bigg]=v(k_0,h_0)\\
         &\quad+ \mathbb{E}\bigg[\int_0^\infty e^{-\theta t}\Big( 
          g\big(c_t;h_t,\partial_kv(k_t,h_t), \partial_hv(k_t,h_t)\big)
          -G\big(k_t,h_t,\partial_kv(k_t,h_t), \partial_hv(k_t,h_t)\big) 
          \Big )\de t \bigg].
       \label{eq:fidentity}
    \end{align}
    In particular,
 \[
v(k_0,h_0)\ge\mathbb{E}\bigg[\int_0^\infty\frac{e^{-\theta t}}{1-\sigma}\left(\frac{c_t}{h_t^\gamma}\right)^{1-\sigma} \de t \bigg].
 \]
Since $c$ is arbitrary,  we can take the {supremum} over $\mathcal{A}(k_0,h_0)$ in this estimate to establish \eqref{eq_classicalvalue}, as $(k_0,h_0)$ is a generic point in $\mathbb{R}^2_{++}$. \vspace{1mm}\\
    If we have an admissible control $c^\ast\in \mathcal{A}(k_0,h_0)$ satisfying  \eqref{eq:condition}, then, by the definition of $G$ in \eqref{G_expression}, 
    \begin{align*}
       \mathbb{E}\bigg[\int_0^\infty e^{-\theta t}\Big( 
          g\big(c^*_t;h^*_t,\partial_kv(k^*_t,h^*_t), \partial_hv(k^*_t,h^*_t)\big)
          -G\big(k^*_t,h^*_t,\partial_kv(k^*_t,h^*_t), \partial_hv(k^*_t,h^*_t)\big) 
          \Big )\de t \bigg]=0.
    \end{align*}
   Consequently, by \eqref{eq_classicalvalue} (which we have just proved) and \eqref{eq:fidentity}, $$V(k_0,h_0)\ge \mathbb{E}\bigg[\int_0^\infty \frac{e^{-\theta t}}{1-\sigma}\left(\frac{c^*_t}{(h^*_t)^\gamma}\right)^{1-\sigma}\de t 
   \bigg]=v(k_0,h_0) \ge V(k_0,h_0),$$ which completes the proof.
\end{proof}
\begin{remark}
Equation \eqref{eq:fidentity} in the proof of Theorem \ref{teo:sufficient} is the so-called \emph{fundamental identity} for the optimal control problem.
Note also that the conclusion \eqref{eq_classicalvalue} of Theorem \ref{teo:sufficient} holds when $v$ is  only a classical \emph{supersolution} of the HJB equation \eqref{eq:HJB-s}.
\end{remark}

We conclude this section with the \emph{closed loop equations (CLEs)}. As in Theorem \ref{teo:sufficient}, we consider a classical solution $v\colon\mathbb{R}^2_{++}\to \mathbb{R}$ of \eqref{eq:HJB-s} that satisfies Assumption \ref{assumptionVT}. Recalling the arguments leading to \eqref{G_expression} in Section \ref{sec_HJB}, we  define the \emph{feedback map}
$$
        \Phi_v: \mathbb{R}_{++}^{2}\to \mathbb{R}_{+}\quad \text{ such that }\quad 
        \Phi_v(k,h)=\arg \max_{c\in(0,Rk]}g\Big(c;h, \partial_k v\big(k,h)\big), \partial_h v\big(k,h)\big)\Big)\in \mathbb{R}_+.
$$
The corresponding CLEs are given by 
	\begin{align}\label{eq:kCL}
		&\de k^{k_0,0,\Phi_v}_t = \big(Bk^{k_0,0,\Phi_v}_t-\Phi_v(k^{k_0,0,\Phi_v}_t,h^{h_0,0,\Phi_v}_t)\big)\de t + \beta_1k^{k_0,0,\Phi_v}_t\de W_{1,t},\quad k_0>0,\\\label{eq:hCL}
		&\de h^{h_0,0,\Phi_v}_t = \rho\big(\Phi_v(k^{k_0,0,\Phi_v}_t,h^{h_0,0,\Phi_v}_t)-h^{h_0,0,\Phi_v}_t\big)\de t + \beta_2h^{h_0,0,\Phi_v}_t\de W_{2,t},\quad h_0>0.
	\end{align}
    By construction, the following corollary is a straightforward consequence of Theorem \ref{teo:sufficient}.
\begin{corollary}
Given $(k_0,h_0)\in\mathbb{R}^2_{++}$, suppose that there exists a solution $(k^{k_0,0,\Phi_v},h^{h_0,0,\Phi_v})$ of the CLEs \eqref{eq:kCL} and \eqref{eq:hCL} satisfying the positivity constraint \eqref{constraints-kh}. 
     Then the process  $c^{\Phi_v}=\Phi(k^{\Phi_v},h^{\Phi_v})$ belongs to $\mathcal{A}(k_0,h_0)$ and is an optimal control, that is,
     \begin{equation*}
     V(k_0,h_0)=
     \mathbb{E}\bigg[\int_0^\infty\frac{e^{-\theta t}}{1-\sigma}\bigg(\frac{c^{\Phi_v}_t}{(h^{h_0,0,\Phi_v}_t)^\gamma}\bigg)^{1-\sigma} \de t \bigg].
     \end{equation*}.
\end{corollary}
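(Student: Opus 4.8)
The plan is to deduce the corollary directly from the verification Theorem~\ref{teo:sufficient}; the only substantive point is to check that the feedback control $c^{\Phi_v}$ is admissible, after which the conclusion is immediate.

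First I would record the regularity of the feedback map $\Phi_v$. Since $v\in C^2(\mathbb{R}^2_{++};\mathbb{R})$, the quantity $\partial_k v(k,h)-\rho\,\partial_h v(k,h)$ depends continuously on $(k,h)$, and the analysis leading to \eqref{G_expression} in Section~\ref{sec_HJB} gives the closed form
\[
\Phi_v(k,h)=
\begin{cases}
Rk, & \partial_k v(k,h)-\rho\,\partial_h v(k,h)\le h^{\gamma(\sigma-1)}(Rk)^{-\sigma},\\
\Big(\dfrac{h^{\gamma(\sigma-1)}}{\partial_k v(k,h)-\rho\,\partial_h v(k,h)}\Big)^{1/\sigma}, & \text{otherwise.}
\end{cases}
\]
A direct computation shows that the two expressions coincide on the interface $\{\partial_k v-\rho\,\partial_h v=h^{\gamma(\sigma-1)}(Rk)^{-\sigma}\}$, so by a standard pasting argument $\Phi_v$ is continuous on $\mathbb{R}^2_{++}$; moreover it takes values in $(0,Rk]\subset\mathbb{R}_+$ and, in particular, is Borel measurable.

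Next I would verify that $c^{\Phi_v}\in\mathcal{A}(k_0,h_0)$. By hypothesis the CLEs \eqref{eq:kCL}--\eqref{eq:hCL} admit a solution $(k^{\Phi_v},h^{\Phi_v})$, which is a continuous $\mathbb{F}$-adapted process fulfilling the positivity constraint \eqref{constraints-kh}. Composing with the continuous map $\Phi_v$, the process $c^{\Phi_v}_t=\Phi_v(k^{\Phi_v}_t,h^{\Phi_v}_t)$ is continuous and adapted, hence progressively measurable with continuous (thus locally integrable) trajectories, and it satisfies $0<c^{\Phi_v}_t\le R\,k^{\Phi_v}_t$ by the previous step, i.e.\ \eqref{bdd_control} holds. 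Together with the assumed \eqref{constraints-kh}, this checks all the requirements in \eqref{def:setofcontrols}, so $c^{\Phi_v}\in\mathcal{A}(k_0,h_0)$. Furthermore, by the very definition of $\Phi_v$, the control $c^{\Phi_v}$ satisfies the pointwise maximality condition \eqref{eq:condition} along $(k^{\Phi_v},h^{\Phi_v})$.

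Finally I would invoke Theorem~\ref{teo:sufficient}: $v$ is, by standing hypothesis, a classical solution of \eqref{eq:HJB-s} satisfying Assumption~\ref{assumptionVT}, and $c^{\Phi_v}\in\mathcal{A}(k_0,h_0)$ fulfils \eqref{eq:condition}; hence the theorem gives that $c^{\Phi_v}$ is optimal at $(k_0,h_0)$ and $v(k_0,h_0)=V(k_0,h_0)$. Rewriting the utility $J(k_0,h_0;c^{\Phi_v})$ via \eqref{def_J} then produces precisely the displayed identity for $V(k_0,h_0)$. The only genuinely delicate part of the argument is the admissibility check above — continuity/measurability of $\Phi_v$ and the fact that the feedback respects the budget constraint \eqref{bdd_control} — everything else being an immediate application of the verification theorem.
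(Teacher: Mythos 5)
Your proof is correct and follows exactly the route the paper has in mind (the paper simply states the corollary is a straightforward consequence of Theorem~\ref{teo:sufficient}, without spelling out the details). You usefully fill in the only nontrivial gap — that $\Phi_v$ is continuous (hence $c^{\Phi_v}$ is progressively measurable with locally integrable trajectories) and that $0<c^{\Phi_v}\le R\,k^{\Phi_v}$ holds by construction, so that $c^{\Phi_v}\in\mathcal{A}(k_0,h_0)$ — and the maximality condition \eqref{eq:condition} holds by the very definition of $\Phi_v$, so the verification theorem applies.
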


\section{Conclusions}
We develop and study a growth model with internal habit formation which is  a stochastic version of the well-known model in \cite{Carroll1997,Carroll2000}.
The corresponding optimal control problem is challenging to analyze, due to the lack of concavity and the singularity of the  utility function.
We prove crucial results regarding  classical solutions of the associated HJB equation  and sufficient conditions for optimality.
The study of the model optimal paths and their comparison with findings in the deterministic case 
remains open and will be the subject of a future research project.
\section*{Acknowledgements}
Michele Aleandri is member of the Gruppo Nazionale per l’Analisi Matematica, la Probabilità e le loro
Applicazioni (GNAMPA) of the Istituto Nazionale di Alta Matematica (INdAM).

Michele Aleandri, Alessandro Bondi and Fausto Gozzi are supported by the Italian Ministry of University and Research (MIUR),
in the framework of PRIN projects 2017FKHBA8 001 (The Time-Space Evolution of Economic Activities:
Mathematical Models and Empirical Applications) and 20223PNJ8K (Impact of the Human Activities on the Environment
and Economic Decision Making in a Heterogeneous Setting: Mathematical Models and Policy Implications). 

\bibliographystyle{plainnat}
\bibliography{bibliography}

\begin{thebibliography}{12}
\providecommand{\natexlab}[1]{#1}
\providecommand{\url}[1]{\texttt{#1}}
\expandafter\ifx\csname urlstyle\endcsname\relax
  \providecommand{\doi}[1]{doi: #1}\else
  \providecommand{\doi}{doi: \begingroup \urlstyle{rm}\Url}\fi

\bibitem[Angoshtari et~al.(2022)Angoshtari, Bayraktar, and Young]{ABY22}
Bahman Angoshtari, Erhan Bayraktar, and Virginia~R. Young.
\newblock Optimal investment and consumption under a habit-formation constraint.
\newblock \emph{SIAM Journal on Financial Mathematics}, 13\penalty0 (1):\penalty0 321--352, 2022.

\bibitem[Bambi and Gozzi(2020)]{BAMBI2020}
Mauro Bambi and Fausto Gozzi.
\newblock Internal habits formation and optimality.
\newblock \emph{Journal of Mathematical Economics}, 91:\penalty0 165--172, 2020.
\newblock ISSN 0304-4068.
\newblock \doi{https://doi.org/10.1016/j.jmateco.2020.09.008}.
\newblock URL \url{https://www.sciencedirect.com/science/article/pii/S0304406820301075}.

\bibitem[Carroll et~al.(1997)Carroll, Overland, and Weil]{Carroll1997}
Christopher~D. Carroll, Jody Overland, and David~N. Weil.
\newblock Comparison utility in a growth model.
\newblock \emph{Journal of Economic Growth}, 2\penalty0 (4):\penalty0 339 – 367, 1997.
\newblock \doi{10.1023/A:1009740920294}.
\newblock URL \url{https://www.scopus.com/inward/record.uri?eid=2-s2.0-0002969888&doi=10.1023%2fA%3a1009740920294&partnerID=40&md5=5456ba5e13d931bff1161596e1701463}.

\bibitem[Carroll et~al.(2000)Carroll, Overland, and Weil]{Carroll2000}
Christopher~D. Carroll, Jody Overland, and David~N. Weil.
\newblock Saving and growth with habit formation.
\newblock \emph{American Economic Review}, 90\penalty0 (3):\penalty0 341–355, 06 2000.
\newblock \doi{10.1257/aer.90.3.341}.
\newblock URL \url{https://www.aeaweb.org/articles?id=10.1257/aer.90.3.341}.

\bibitem[Choulli et~al.(2003)Choulli, Taksar, and Zhou]{choulli2003diffusion}
Tahir Choulli, Michael Taksar, and Xun~Yu Zhou.
\newblock A diffusion model for optimal dividend distribution for a company with constraints on risk control.
\newblock \emph{SIAM Journal on Control and Optimization}, 41\penalty0 (6):\penalty0 1946--1979, 2003.

\bibitem[Crandall et~al.(1992)Crandall, Ishii, and Lions]{crandall1992user}
Michael~G Crandall, Hitoshi Ishii, and Pierre-Louis Lions.
\newblock User’s guide to viscosity solutions of second order partial differential equations.
\newblock \emph{Bulletin of the American mathematical society}, 27\penalty0 (1):\penalty0 1--67, 1992.

\bibitem[Di~Giacinto et~al.(2011)Di~Giacinto, Federico, and Gozzi]{di2011pension}
Marina Di~Giacinto, Salvatore Federico, and Fausto Gozzi.
\newblock Pension funds with a minimum guarantee: a stochastic control approach.
\newblock \emph{Finance and Stochastics}, 15:\penalty0 297--342, 2011.

\bibitem[Fleming and Soner(2006)]{fleming2006controlled}
Wendell~H Fleming and Halil~Mete Soner.
\newblock \emph{Controlled Markov processes and viscosity solutions}, volume~25.
\newblock Springer Science \& Business Media, 2006.

\bibitem[Freni et~al.(2008)Freni, Gozzi, and Pignotti]{freni2008optimal}
Giuseppe Freni, Fausto Gozzi, and Cristina Pignotti.
\newblock Optimal strategies in linear multisector models: Value function and optimality conditions.
\newblock \emph{Journal of Mathematical Economics}, 44\penalty0 (1):\penalty0 55--86, 2008.

\bibitem[Gassiat et~al.(2014)Gassiat, Gozzi, and Pham]{gassiat2014investment}
Paul Gassiat, Fausto Gozzi, and Huy{\^e}n Pham.
\newblock Investment/consumption problem in illiquid markets with regime-switching.
\newblock \emph{SIAM Journal on Control and Optimization}, 52\penalty0 (3):\penalty0 1761--1786, 2014.

\bibitem[Ishii and Loreti(2002)]{ishii2002class}
Hitoshi Ishii and Paola Loreti.
\newblock A class of stochastic optimal control problems with state constraint.
\newblock \emph{Indiana University Mathematics Journal}, pages 1167--1196, 2002.

\bibitem[Safonov(1989)]{safonov1989classical}
Mikhail~V Safonov.
\newblock On the classical solution of nonlinear elliptic equations of second order.
\newblock \emph{Mathematics of the USSR-Izvestiya}, 33\penalty0 (3):\penalty0 597, 1989.

\end{thebibliography}

\appendix


\section{Proof of Lemmas \ref{lemm:safonov} and \ref{lemm:userguide} }\label{Appendix}


\begin{myproof}{Lemma \ref{lemm:safonov}}
In the proof, we denote by $C>0$ a positive constant possibly depending on $\overline{D}\subset \mathbb{R}^2_{++}$ and the parameters of the model allowed to change from line to line.  
Fix $Q=(Q_{ij})_{i,j}\in\mathcal{S}^2, \,(k,h)\in \overline{D},\, p=\left[\begin{smallmatrix}
        p_k\\p_h
    \end{smallmatrix}\right],
    \bar{p}=\left[\begin{smallmatrix}
        \bar{p}_k\\\bar{p}_h
        \end{smallmatrix}\right]\in\mathbb{R}^2$ and $v,\bar{v}\in\mathbb{R}.$ 
To prove \eqref{ineq:Saf2} we distinguish two cases: (i)  $p$  and $\bar{p}$ satisfy the same inequality in \eqref{G_expression}; (ii) $p$ satisfies the first inequality in  \eqref{G_expression} and $\bar{p}$ the second.\\
\underline{\emph{Case (i)}}. We consider two subcases: \begin{enumerate} [(I)]
    \item $ p_k-\rho p_h
    \le 
h^{\gamma(\sigma-1)}(R k)^{-\sigma}\quad $ and 
$ \quad \bar{p}_k-\rho \bar{p}_h
    \le 
h^{\gamma(\sigma-1)}(R k)^{-\sigma}$
;
    \item $ {p}_k-\rho {p}_h
  > 
h^{\gamma(\sigma-1)}(R k)^{-\sigma}\quad $ and 
$ \quad \bar{p}_k-\rho \bar{p}_h
  >
h^{\gamma(\sigma-1)}(R k)^{-\sigma}$.
\end{enumerate}
Notice that, by the continuity of $G$ in \eqref{G_expression}, in Case (II) we can also consider the inequalities with  $\ge$ instead of $>$. Since $D$ is bounded, straightforward computations based on \eqref{eq:HJB-s} and \eqref{def_Ftilde} yield
\begin{align*}
  &  |\tilde{F}(Q, p, v, (k, h)) - \tilde{F}(Q, \bar{p}, \bar{v}, (k, h))| \leq  \theta |v - \bar{v}| + C (|p_k - \bar{p}_k| + |p_h - \bar{p}_h|)\\
    &\qquad + \begin{cases}  Rk (|p_k - \bar{p}_k|+\rho|p_h-\bar{p}_h| ),& \mbox{if (I) holds}, \\
(1+\frac{1}{\sigma-1})h^{\gamma(1-\frac{1}{\sigma})} \big|(p_k - \rho p_h)^{1-\frac{1}{\sigma}} - (\bar{p}_k - \rho \bar{p}_h)^{1-\frac{1}{\sigma}}\big|,& \mbox{if (II) holds}. 
    \end{cases}
\end{align*}
Considering also that the power $x^{1-\frac{1}{\sigma}}$ is Lipschitz-continuous in $(\epsilon,\infty),$ for any $\epsilon>0$, and that $\overline{D}\subset \mathbb{R}^2_{++}$,  \eqref{ineq:Saf2} follows.\\
\underline{\emph{Case (ii).}} Suppose that  $p_k - \rho p_h \leq h^{\gamma(\sigma - 1)}(Rk)^{-\sigma} $ and $\bar{p}_k - \rho \bar{p}_h > h^{\gamma(\sigma - 1)}(Rk)^{-\sigma} $. 
Since 
\begin{multline*}
0< \max\{h^{\gamma(\sigma - 1)}(Rk)^{-\sigma}-(p_k-\rho p_h),(\bar{p}_k-\rho \bar{p}_h) -h^{\gamma(\sigma - 1)}(Rk)^{-\sigma}
\}\\
\le (\bar{p}_k-\rho \bar{p}_h)-({p}_k-\rho {p}_h)
\le C(|p_k-\bar{p}_k|+|p_h-\bar{p}_h|),
\end{multline*}
 \eqref{ineq:Saf2} follows from \underline{\emph{Case (i)}}.\vspace{2mm}

As regards \eqref{eq_saf}, by \eqref{G_expression}, \eqref{eq:HJB-s}, the sublinear growth of the power $x^{1-\frac{1}{\sigma}}$ in $\mathbb{R}_+$ and the fact that $\overline{D}\subset \mathbb{R}^2_{++}$, an argument similar to the one above for \eqref{ineq:Saf2} entails that 
$$C_1=C\bigg(1+\sum_{i,j=1}^2 |Q_{ij}| + |p_k| + |p_h| + |v|\bigg)$$
 is a Lipschitz-continuity constant for $\tilde{F}(Q,p,v,\cdot)$  in ${D}$. Moreover, $\tilde{F}(Q,p,v,\cdot)$ is  bounded by $C_1$ in $D$ . Therefore, $\tilde{F}(Q,p,v,\cdot)$ is $\alpha-$H\"older's continuous with constant $2C_1$ in $D$ for any $\alpha\in(0,1)$, whence \eqref{eq_saf}. This completes the proof.  
%
\end{myproof}
\vspace{2mm}
\\
\begin{myproof}{Lemma \ref{lemm:userguide}} As in  the proof of Lemma \ref{lemm:safonov}, we denote by $C>0$ a positive constant possibly depending on $D\subset \mathbb{R}^2_{++}$ and the parameters of the model allowed to change from line to line. 
   Fix $(k,h),\,(\bar{k},\bar{h})\in D,\,v\in\mathbb{R}$ and $Q,\bar{Q}\in\mathcal{S}^2$ such that \eqref{ineq:QIQ} is fulfilled. Based on \eqref{G_expression}, we distinguish the two cases (A) and (B).
    \begin{itemize}
        \item[(A)] One of the following holds: 
        \begin{itemize}
            \item[($A_1$)]$\alpha({k}-\bar{k}-\rho({h}-\bar{h}))\leq \bar{h}^{\gamma(\sigma-1)}(R\bar{k})^{-\sigma}\quad \!$ and $\quad \!\alpha({k}-\bar{k}-\rho({h}-\bar{h}))\leq h^{\gamma(\sigma-1)}(Rk)^{-\sigma}$;
             \item[($A_2$)]$\alpha({k}-\bar{k}-\rho({h}-\bar{h}))> \bar{h}^{\gamma(\sigma-1)}(R\bar{k})^{-\sigma}\quad \!$ and $\quad \!\alpha({k}-\bar{k}-\rho({h}-\bar{h}))> h^{\gamma(\sigma-1)}(Rk)^{-\sigma}.$
        \end{itemize}
        \item[(B)] One of the following holds:  
        \begin{itemize}
            \item[($B_1$)] $\alpha({k}-\bar{k}-\rho({h}-\bar{h}))> \bar{h}^{\gamma(\sigma-1)}(R\bar{k})^{-\sigma}\quad \!$ and $\quad \!\alpha({k}-\bar{k}-\rho({h}-\bar{h}))\leq h^{\gamma(\sigma-1)}(Rk)^{-\sigma}$;
            \item[($B_2$)] $\alpha({k}-\bar{k}-\rho({h}-\bar{h}))\leq \bar{h}^{\gamma(\sigma-1)}(R\bar{k})^{-\sigma}\quad \!$ and $\quad \!\alpha({k}-\bar{k}-\rho({h}-\bar{h}))> h^{\gamma(\sigma-1)}(Rk)^{-\sigma}$.
        \end{itemize}
    \end{itemize}
    In Case (A), writing $\Sigma$ (resp., $\bar{\Sigma}$) for the diagonal matrix $\Sigma(k,h)$ (resp., $\Sigma(\bar{k},\bar{h})$) to simplify the notation, we compute
    \begin{align*}
        &\tilde{F}(Q, \alpha(k-\bar{k}, h-\bar{h}), v,(k, h)) 
             -
             \tilde{F}(\bar{Q}, \alpha(k-\bar{k}, h-\bar{h}),v, (\bar{k}, \bar{h}))\\ 
        &\qquad = \alpha B(k-\bar{k})^2 -\rho\alpha(h-\bar{h})^2 +\frac{1}{2}\mathrm{tr}\big( {\Sigma}^\top{Q}{\Sigma}-\bar{\Sigma}^{\top}\bar{Q}\bar{\Sigma}\big)\\&\qquad\quad  +\begin{cases}
            -\alpha R (k-\bar{k})^2+\alpha R\rho(k-\bar{k})(h-\bar{h})+ \frac{1}{\sigma-1}\big((\frac{\bar{h}^{\gamma}}{R\bar{k}})^{\sigma-1}-(\frac{h^{\gamma}}{R k})^{\sigma-1}\big), & \mbox{ if } (A_1), \\
            \big(1+\frac{1}{\sigma-1}\big)\big( \alpha(k-\bar{k})-\rho\alpha(h-\bar{h}) \big)^{1-\frac{1}{\sigma}}\big(\bar{h}^{\gamma(1-\frac{1}{\sigma})}-h^{\gamma(1-\frac{1}{\sigma})}\big), & \mbox{ if } (A_2).
        \end{cases}
    \end{align*}
The boundedness of $D$ and the fact that  $\overline{D}\subset \mathbb{R}^2_{++}$, together with the trivial inequality $2ab\le a^2+b^2,\,(a,b)\in\mathbb{R}^2,$  yield (cf. \underline{\emph{Case (i)}}\,(\upperRomannumeral{1}) in the proof of Lemma \ref{lemm:safonov})
\begin{align}
       \notag& \tilde{F}(Q, \alpha(k-\bar{k}, h-\bar{h}), v,(k, h)) 
             -
             \tilde{F}(\bar{Q}, \alpha(k-\bar{k}, h-\bar{h}),v, (\bar{k}, \bar{h}))
        \\ 
        &\qquad \leq C\big(\alpha |(k-\bar{k},h-\bar{h})|^2 + |(k-\bar{k},h-\bar{h})|\big) +\frac{1}{2}\mathrm{tr}\big( {\Sigma}^\top{Q}{\Sigma}-\bar{\Sigma}^{\top}\bar{Q}\bar{\Sigma}\big).\label{casse(A)}
\end{align}
To estimate the trace in \eqref{casse(A)}, we multiply \eqref{ineq:QIQ} on the right by the positive semidefinite matrix 
$$\begin{bmatrix}
    \Sigma^\top\Sigma && \,\bar{\Sigma}^\top\Sigma \\
    \Sigma^\top\bar{\Sigma} &&\,\bar{\Sigma}^\top\bar{\Sigma}
\end{bmatrix}\in \mathcal{S}^4$$
and then take the trace. Since this passage does not alter the order in \eqref{ineq:QIQ}, we have 
\begin{equation*}
    \mathrm{tr}\big( \Sigma^{\top}Q\Sigma
    -\bar{\Sigma}^\top\bar{Q}\bar{\Sigma}\big) 
    \leq 3\alpha \,
    \mathrm{tr}\big( (\Sigma^\top-\bar{\Sigma}^\top)(\Sigma -\bar{\Sigma})\big) \leq C\alpha|(k-\bar{k},h-\bar{h})|^2.
\end{equation*}
Plugging this bound into \eqref{casse(A)} gives \eqref{est_user}, completing the proof of Case (A).
\vspace{2mm}
\\
Notice that, by the continuity of $G$ in \eqref{G_expression}, in Case $(A_2)$ we can also consider the inequalities with  $\ge$ instead of $>$. Thus, Case (B) can be readily deduced from Case (A), see also \underline{\emph{Case (ii)}} in the proof of Lemma \ref{lemm:safonov}. The proof is now complete.
\end{myproof}

\end{document}